\newtheorem{thm}{Theorem}
\newtheorem{lem}{Lemma}
\newtheorem{prop}{Proposition}
\newtheorem{cor}{Corollary}
\theoremstyle{definition}
\newtheorem{ex}{Example}
   \def\MR#1{}
\title[On the last nonzero digits of $n!$]{On the last nonzero digits of $n!$ in a given base}
\author{Bartosz Sobolewski}
\address{Jagiellonian University, Faculty of Mathematics and Computer Science, Institute of Mathematics, {\L}ojasiewicza 6, 30-348 Krak\'{o}w, Poland}
\email{\tt bartosz.sobolewski@doctoral.uj.edu.pl}
\keywords{Last nonzero digits, automatic sequences, asymptotic density} 
\subjclass[2010]{Primary: 11A63; Secondary: 11B05, 11B85, 68Q45, 68R15}
\begin{document}
\begin{abstract}
In this paper we study the sequence of strings of $k$ last nonzero digits of $n!$ in a given base $b$. We determine for which $b$ this sequence is automatic and show how to generate it using a uniform morphism. We also compute how often each possible string of $k$ digits appears as the $k$ last nonzero digits of $n!$.
\end{abstract}
\maketitle

\section{Introduction} \label{sec:Introduction}

The sequence $\{\ell_b (n!)\}_{n \geq 0}$ of the last nonzero digit in the base-$b$ expansion of $n!$ has been an object of interest of several authors. 
For example, Dresden \cite{Dr08} showed that the number
$$\sum_{n=0}^{\infty} \frac{\ell_{10}(n!)}{10^n}$$
is transcendental. 
Deshouillers and Ruzsa \cite{DR11} investigated the case $b=12$ and computed that for $a \in \{1,\ldots,11\}$ the asymptotic density of the set $$\{n \geq 0: \ell_{12}(n!) = a\}$$ is $1/2$ if $a \in \{4,8\}$  and $0$ otherwise. Recall that by the asymptotic density of a set $A \subset \mathbb{N}$ we mean the limit
$$\lim_{n \to \infty} \frac{ |A \cap \{1,\ldots,n\}|}{n},$$
if it exists. The sequence $\{\ell_{12}(n!)\}_{n \geq 0}$ was further studied by Deshouillers \cite{De12, De16}. Deshouillers and Luca \cite{DL10} proved that
$$ |\{ 0 \leq n \leq x: n! \text{~is~a~sum~of~three~squares} \}| = \frac{7}{8}x + O(x^{2/3}).$$
This can also be interpreted as a result concerning the last two nonzero digits of $n!$ in base $4$. Indeed, by Legendre's three-square theorem, $m$ is not a sum of three squares if and only if 
the string of two last nonzero digits of $m$ in base 4 is either $13$ or $33$.
%
All of these results rely on showing that the respective sequences are \emph{automatic}  or coincide with an automatic sequence on a set of asymptotic density 1, as in the case $b=12$.
We recall the notion of automaticity in Section \ref{sec:Automatic}. 

Motivated by these results, we will consider a general problem concerning the $k$ last nonzero digits of $n!$ in base $b$, where $k \geq 1$ is a fixed integer.  For a positive integer $m$ let $\ell_{b,k}(m)$ be the number whose base-$b$ expansion is given by $k$ last nonzero digits of $m$. We can add leading zeros if necessary. If $k=1$, then for simplicity we will use the previous notation $\ell_{b}(m)$. For a positive integer $c$ (not necessarily prime) denote by $\nu_c(m)$  the largest integer $l$ such that $c^l$ divides $m$, and by $m \bmod c$ the residue of $m$ modulo $c$ lying in the set $\{0,1,\ldots,c-1\}$ . Then we have
$$\ell_{b,k}(m) =\frac{m}{b^{\nu_b(m)}} \bmod b^k.$$
Obviously, $\ell_{b,k}(m)$ is not divisible by $b$ and satisfies $1 \leq \ell_{b,k}(m) \leq b^k-1$.

Our first aim is to study the automaticity of the sequence $\{\ell_{b,k}(n!)\}_{n \geq 0}$ and find (in the automatic case) its explicit description in terms of \emph{uniform morphisms}. Moreover, we would like to determine how often each $a \in \{1, \ldots, b^k-1\}$ appears in this sequence. More precisely, we are interested in computing the asymptotic density (if it exists) of the set 
$$ \{n \geq 0: \ell_{b,k}(n!) = a\},$$
which we will also call the \emph{asymptotic frequency} of $a$ in $\{\ell_{b,k}(n!)\}_{n \geq 0}$ and denote by $f_{b,k}(a)$.
We could also rephrase the latter question and ask how often a fixed string of digits $a_{k}\cdots a_1$, with $a_i \in \{0,\ldots,b-1\}$ and $a_1 \neq 0$, appears as $k$ last nonzero digits of $n!$.
 In Section \ref{sec:Main} we answer these questions in a general situation. 

\section{Main results}\label{sec:Main}
Consider the prime factorization of $b$:
$$b = p_1^{l_1} \cdots p_r^{l_r},$$
 where $p_1, \ldots, p_r$ are distinct primes and $l_1, \ldots, l_r$ are positive integers. 
If $r \geq 2$, then we can additionally renumber the primes so that the following technical conditions are satisfied:
 \begin{equation} \label{eq:primes_inequality_1}
 l_1(p_1-1) \geq l_2(p_2-1) \geq \cdots \geq l_r(p_r-1)
 \end{equation}
and 
\begin{equation} \label{eq:primes_inequality_2}
p_1 = \max\{p_i: l_i(p_i-1) = l_1(p_1-1)\}.
\end{equation}
It was recently proved by Lipka \cite{Li18} that $\{\ell_{b}(n!)\}_{n \geq 0}$ is automatic if and only if $b = p_1^{l_1}$ or $l_1(p_1-1) > l_2(p_2-1)$.
Our first result shows that a similar claim remains true for the sequence $\{\ell_{b,k}(n!)\}_{n \geq 0}$ describing the $k$ last nonzero digits. Moreover, we show that if $l_1(p_1-1) = l_2(p_2-1)$, then the sequence $\{\ell_{b,k}(n!)\}_{n\geq 0}$ is ``almost'' automatic. 
\begin{thm} \label{prop:automatic}
If either $b = p_1^{l_1}$ or $l_1(p_1-1) > l_2(p_2-1)$, then the sequence $\{ \ell_{b,k} (n!) \}_{n \geq 0}$ is $p_1$-automatic.
If $l_1(p_1-1) = l_2(p_2-1)$, then the sequence $\{ \ell_{b,k} (n!) \}_{n \geq 0}$  is not $m$-automatic for any $m$. However, it coincides with a $p_1$-automatic sequence on a set of asymptotic density 1.
\end{thm}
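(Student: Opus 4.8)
The plan is to reduce the whole problem to prime-power components via the Chinese Remainder Theorem and the classical automaticity of the ``unit part'' of $n!$ modulo a prime power, and then to determine which prime controls the $b$-adic valuation. Since $b^k = \prod_i p_i^{kl_i}$ and $\mathbb{Z}/b^k\mathbb{Z} \cong \prod_i \mathbb{Z}/p_i^{kl_i}\mathbb{Z}$, knowing $\ell_{b,k}(n!) = \frac{n!}{b^{v}} \bmod b^k$ (with $v = \nu_b(n!) = \min_i \lfloor \nu_{p_i}(n!)/l_i \rfloor$) is equivalent to knowing each residue $\frac{n!}{b^{v}} \bmod p_i^{kl_i}$, and $\nu_{p_i}(n!/b^v) = \nu_{p_i}(n!) - l_i v \geq 0$. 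I would first record two automaticity facts. Writing $(m!)_p$ for the product of the integers up to $m$ coprime to $p$ and $u_p(n) := n!/p^{\nu_p(n!)} \bmod p^s$, the recursion $n! = p^{\lfloor n/p\rfloor}\lfloor n/p\rfloor!\,(n!)_p$ together with the Wilson-type periodicity of $(m!)_p \bmod p^s$ shows that $u_p(n)$ is $p$-automatic; and from Legendre's formula $\nu_p(n!) = (n - s_p(n))/(p-1)$ the residue $\nu_p(n!) \bmod l$ is $p$-automatic, since $n \bmod l(p-1)$ and $s_p(n) \bmod l(p-1)$ are $p$-automatic and division by $p-1$ is an isomorphism on the pertinent subgroup. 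Consequently each ``non-vanishing'' $p_i$-component equals $p_i^{t_i} u_{p_i}(n) \bmod p_i^{kl_i}$ with $t_i = \nu_{p_i}(n!) \bmod l_i$, and is $p_i$-automatic.

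For the two automatic cases this reduction essentially suffices. If $b = p_1^{l_1}$, then $v = \lfloor \nu_{p_1}(n!)/l_1 \rfloor$ and $\ell_{b,k}(n!) = p_1^{t_1} u_{p_1}(n) \bmod p_1^{kl_1}$ is $p_1$-automatic. If $l_1(p_1-1) > l_2(p_2-1)$, I would use Legendre's formula to show that $\nu_{p_1}(n!)/l_1 = (n - s_{p_1}(n))/(l_1(p_1-1))$ is, for all large $n$, strictly the smallest of the $\nu_{p_i}(n!)/l_i$ by a margin growing linearly in $n$; hence $v = \lfloor \nu_{p_1}(n!)/l_1 \rfloor$ and, for $i \geq 2$, the exponent $\nu_{p_i}(n!) - l_i v \geq k l_i$, so those components vanish modulo $p_i^{kl_i}$. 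Thus for large $n$ the sequence agrees with the fixed CRT-combination of the $p_1$-component and zeros, a $p_1$-automatic sequence, and a finite modification does not affect automaticity.

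In the equal case let $p_1, \dots, p_s$ be the tied primes, $l_i(p_i-1) = L$. By Legendre, $\lfloor \nu_{p_i}(n!)/l_i \rfloor = \lfloor (n - s_{p_i}(n))/L \rfloor$, so the minimizing prime is the one with the \emph{largest} digit sum $s_{p_i}(n)$. The key estimate is that, for each $j$, the mean of $s_{p_1}(n) - s_{p_j}(n)$ over $n < N$ grows like $(g(p_1) - g(p_j))\log N$ with $g(p) = (p-1)/(2\ln p)$ strictly increasing, while the variance is $O(\log N)$. Since by \eqref{eq:primes_inequality_2} the prime $p_1$ is the largest among the tied primes, $g(p_1) > g(p_j)$, and Chebyshev's inequality shows that $\{n : s_{p_1}(n) - s_{p_j}(n) \leq C\}$ has asymptotic density $0$ for every constant $C$. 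Taking the (finite) union over $j$ and over the subdominant primes, on a set of density $1$ we have $v = \lfloor \nu_{p_1}(n!)/l_1 \rfloor$ with margin at least $k$ over every other prime, so all other components vanish and $\ell_{b,k}(n!)$ equals the $p_1$-automatic sequence $\tilde{a}(n)$ obtained by CRT-combining $p_1^{t_1} u_{p_1}(n) \bmod p_1^{kl_1}$ with zeros. This concentration estimate is the main technical input of the density-one half.

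For non-automaticity I expect the main obstacle. Assuming $\{\ell_{b,k}(n!)\}$ is $m$-automatic, I would argue by a Cobham-type rigidity, after first checking that $\tilde{a}$ is not eventually periodic. If $m$ is multiplicatively independent from $p_1$, then an $m$-automatic sequence agreeing on a set of density one with the $p_1$-automatic $\tilde{a}$ should be forced to be eventually periodic, contradicting the non-periodicity of $\tilde{a}$. If instead $m$ is multiplicatively dependent on $p_1$ (so $m$ is a power of $p_1$ and the sequence is $p_1$-automatic), then the disagreement set $D = \{n : \ell_{b,k}(n!) \neq \tilde{a}(n)\}$ would be $p_1$-automatic of density $0$; but membership in $D$ is governed by comparisons of base-$p_1$ and base-$p_2$ digit sums and genuinely reflects the base-$p_2$ structure, so Cobham's theorem (applied with the multiplicatively independent primes $p_1, p_2$) would force $D$ to be eventually periodic, hence finite, contradicting that $D$ is infinite. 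The delicate point, which I expect to be the hard part, is isolating the base-$p_2$ automatic structure attached to $D$ cleanly enough to invoke Cobham's theorem and to justify the density-one rigidity in the first sub-case.
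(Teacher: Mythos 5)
The automatic cases and the density-one statement are handled essentially as in the paper: you reduce via CRT to the $p_1$-component plus vanishing components, using Legendre's formula for the strict-inequality case and a digit-sum concentration estimate for the tied case. Your Chebyshev argument for $s_{p_1}(n)-s_{p_j}(n)$ (mean $\asymp(g(p_1)-g(p_j))\log N$ with $g(p)=(p-1)/(2\ln p)$ increasing, variance $O(\log N)$) is a correct reproof of the lemma the paper imports from Deshouillers--Ruzsa, and your identification of condition (\ref{eq:primes_inequality_2}) as the reason $p_1$ realizes the minimum valuation is exactly right. The $p_1$-automaticity of the unit part $u_{p_1}(n)$ and of $\nu_{p_1}(n!)\bmod l_1$ via the Gauss-factorial recursion is also the paper's mechanism (its Lemmas \ref{lem:product}, \ref{lem:recurrence} and Proposition \ref{prop:morphism}), modulo the care needed at $p=2$.

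The genuine gap is the non-automaticity claim in the tied case, and it is not a small one. Your first sub-case ($m$ multiplicatively independent of $p_1$) invokes a ``density-one Cobham rigidity'': that an $m$-automatic sequence agreeing on a set of density one with a non-eventually-periodic $p_1$-automatic sequence must be eventually periodic. This is not classical Cobham; it is a deep, recent theorem (a density version of Cobham's theorem), and asserting it as ``should be forced'' leaves the proof incomplete. Your second sub-case ($m$ a power of $p_1$) is logically broken as written: from the assumption you get that the disagreement set $D$ is $p_1$-automatic, but to apply Cobham you would also need $D$ to be $p_2$-automatic, and ``membership in $D$ is governed by base-$p_2$ digit sums'' does not produce a $p_2$-automaton (indeed $D$ is defined by \emph{comparing} $s_{p_1}(n)$ with $s_{p_2}(n)$, which is not recognizable in either base). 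You also never establish that $D$ is infinite, which your contradiction requires. The paper sidesteps all of this with a two-line reduction you missed: $\ell_{b}(n!)=\ell_{b,1}\bigl(\ell_{b,k}(n!)\bigr)$ is a coding of $\ell_{b,k}(n!)$, automatic sequences are closed under codings, and the $k=1$ case is already known to be non-automatic when $l_1(p_1-1)=l_2(p_2-1)$ by Lipka's theorem. If you want a self-contained argument instead of citing Lipka, you must actually carry out the hard analysis you flag at the end; as it stands that half of the theorem is unproved.
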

At the end of Section \ref{sec:proofs} we give a fairly effective way to generate $\{\ell_{b,k}(n!)\}_{n \geq 0}$  (or the corresponding $p_1$-automatic sequence) using uniform morphisms.

The following theorem gives for each number $a \in \{1,\ldots,b^k-1\}$ the frequency $f_{b,k}(a)$ of $a$  in $\{\ell_{b,k}(n!)\}_{n \geq 0}$. It turns out that these frequencies remain the same after restricting to subsequences along arithmetic progressions.

\begin{thm}\label{thm:density}
Let $a \in \{1, \ldots, b^k-1\}$ be such that $b {\nmid} a$. 
We have
$$f_{b,k}(a) =\frac{1}{(p_1-1)l_1} p_1^{\nu_{p_1}(a) -kl_1+1}$$
if $(b/p_1^{l_1})^k | a$, otherwise $f_{b,k}(a) = 0$.
Moreover, for any integers $c \geq 1$ and $d \geq 0$,  the number  $a$ appears in the subsequence $\{\ell_{b,k}((cn+d)!)\}_{n \geq 0}$ with asymptotic frequency $f_{b,k}(a)$, regardless of $c$ and $d$. 
\end{thm}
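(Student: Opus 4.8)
The plan is to reduce everything to the behavior of $\nu_{p_1}(n!)$ and a companion sequence that captures the "unit part" of $n!$ modulo a power of $p_1$.

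Write $p:=p_1$, $l:=l_1$, and $M:=kl_1$. By the Chinese Remainder Theorem, the value $\ell_{b,k}(n!)\in\{1,\dots,b^k-1\}$ is determined by its residues modulo each $p_i^{kl_i}$. Since the $p_i$-adic valuation of $n!/b^{\nu_b(n!)}$ equals $\nu_{p_i}(n!)-l_i\nu_b(n!)$, Legendre's formula $\nu_{p_i}(n!)=(n-s_{p_i}(n))/(p_i-1)$ together with the ordering \eqref{eq:primes_inequality_1} shows that for $i\ge2$ this valuation tends to infinity in the strict case and exceeds $kl_i$ outside a set of density $0$ in the borderline case. Hence, outside a set of density $0$ (the coincidence set of Theorem~\ref{prop:automatic}), we have $\ell_{b,k}(n!)\equiv0\pmod{p_i^{kl_i}}$ for every $i\ge2$. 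Consequently $f_{b,k}(a)=0$ unless $(b/p_1^{l_1})^k\mid a$, and for such $a$, using $p_i^{kl_i}\mid a$ for $i\ge2$, the event $\ell_{b,k}(n!)=a$ agrees up to a density-$0$ set with the single congruence $\ell_{b,k}(n!)\equiv\alpha\pmod{p^{M}}$, where $\alpha:=a\bmod p^{M}$. Because a set of density $0$ meets every arithmetic progression in a (relatively) density-$0$ set, it suffices to compute the frequency of this congruence along every progression $n\equiv d\pmod c$.

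First I would isolate the valuation from the unit. Outside the discarded density-$0$ set one has $\nu_b(n!)=\lfloor\nu_p(n!)/l\rfloor$, so modulo $p^{M}$ we may write
$$\ell_{b,k}(n!)\equiv p^{t_n}\,V_n\pmod{p^{M}},\qquad t_n:=\nu_p(n!)\bmod l\in\{0,\dots,l-1\},$$
where $V_n\in(\mathbb{Z}/p^{M}\mathbb{Z})^\times$ is the unit $V_n\equiv\bigl(n!/p^{\nu_p(n!)}\bigr)\cdot\bigl(\prod_{i\ge2}p_i^{l_i\nu_b(n!)}\bigr)^{-1}$. Since $b\nmid a$ while $p_i^{kl_i}\mid a$ for $i\ge2$, we must have $\nu_p(a)<l\le M$; writing $\alpha=p^{t}\beta$ with $t=\nu_p(\alpha)=\nu_p(a)$ and $\beta$ a unit, the congruence $\ell_{b,k}(n!)\equiv\alpha\pmod{p^{M}}$ is equivalent to $t_n=t$ together with $V_n\equiv\beta\pmod{p^{M-t}}$. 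Thus the theorem reduces to the joint distribution of the pair $(t_n,V_n)$ in the group $G:=(\mathbb{Z}/l\mathbb{Z})\times(\mathbb{Z}/p^{M}\mathbb{Z})^\times$ along arithmetic progressions.

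The key step is the equidistribution lemma: for every $c\ge1$ and $d\ge0$, the pair $(t_n,V_n)$ is equidistributed on $G$ with the uniform limiting distribution as $n$ runs through $\{n:n\equiv d\pmod c\}$. Granting this, $\Pr[t_n=t]=1/l$, and conditionally on $t_n=t$ the unit $V_n$ is uniform on $(\mathbb{Z}/p^{M}\mathbb{Z})^\times$, so the probability that it reduces to the fixed unit $\beta$ modulo $p^{M-t}$ is $p^{t}/\varphi(p^{M})=\tfrac{1}{p-1}p^{\,t-M+1}$ (the fiber of the reduction map on units having size $p^{t}$). Multiplying,
$$f_{b,k}(a)=\frac1l\cdot\frac{1}{p-1}\,p^{\,t-M+1}=\frac{1}{(p_1-1)l_1}\,p_1^{\,\nu_{p_1}(a)-kl_1+1},$$
independently of $c$ and $d$; a short count confirms these frequencies sum to $1$ over the admissible $a$.

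The main obstacle is the equidistribution lemma. I would derive it from the base-$p$ digit recursion for the unit part, $n!/p^{\nu_p(n!)}=\bigl(\lfloor n/p\rfloor!/p^{\nu_p(\lfloor n/p\rfloor!)}\bigr)\prod_{1\le i\le n,\,p\nmid i}i$, combined with the generalized Wilson congruence $\prod_{1\le i<p^{M},\,p\nmid i}i\equiv\pm1\pmod{p^{M}}$ (the sign being $+1$ precisely when $p=2$, $M\ge3$). This exhibits $(t_n,V_n)$ as governed by a finite digit-automaton on $G$, the same uniform morphism used at the end of Section~\ref{sec:proofs}, so that for any nontrivial character $\psi\otimes\chi$ of $G$ the twisted sum $\sum_{n\le x,\,n\equiv d\,(c)}(\psi\otimes\chi)(t_n,V_n)$ factors, up to boundedly many digits constrained by $n\equiv d\pmod c$, into a product of identical per-digit sums. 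Showing that each such sum has modulus strictly below its trivial value — equivalently, that the associated digit-transition matrix is primitive with the uniform measure on $G$ as its unique stationary distribution — gives geometric cancellation at a rate independent of $c$ and $d$, which is exactly what the uniformity along progressions requires. The delicate points are verifying primitivity against every nontrivial character, in particular handling the non-cyclic group $(\mathbb{Z}/2^{M}\mathbb{Z})^\times$ and the Wilson sign when $p=2$, and confirming that fixing the low-order digits forced by the progression removes only finitely many factors and so leaves the limit unchanged.
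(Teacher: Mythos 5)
Your overall strategy mirrors the paper's: discard a density-zero set using the valuation comparison (as in Lemmas \ref{lem:two_bases} and \ref{lem:dens_1}), reduce the event $\ell_{b,k}(n!)=a$ to a condition on the pair (valuation of $n!$ mod $l_1$, unit part mod $p_1^{l_1k}$), and then count fibers assuming uniform distribution on the relevant group; your final fiber count does reproduce the stated formula. However, the entire weight of the theorem rests on your ``equidistribution lemma,'' and the plan you sketch for it has concrete defects. First, for $c$ coprime to $p_1$ the condition $n\equiv d\pmod c$ is \emph{not} a constraint on boundedly many base-$p_1$ digits of $n$, so the claimed factorization of the twisted character sum ``up to boundedly many digits constrained by $n\equiv d\pmod c$'' breaks down for general progressions; the paper circumvents this by carrying $n\bmod v$ as a third coordinate of the automaton state, with $v$ chosen divisible by $c$ (this is the point of the free parameter $v$ in $\alpha_{p,k,u,v}$ and of Proposition \ref{prop:equal_dens}). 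Second, the per-digit transition for the unit part is not a function of the current digit and the group element alone: by Lemmas \ref{lem:product} and \ref{lem:recurrence} the factor $(pn+i)_{p}!$ depends on $n$ modulo $p_1^{l_1k-1}$ and on a parity-dependent sign (with an extra wrinkle when $p_1^{l_1k}=4$), so your transfer operator on $(\mathbb{Z}/l_1\mathbb{Z})\times(\mathbb{Z}/p_1^{l_1k}\mathbb{Z})^\times$ is not well defined until the state is enlarged --- again exactly what the coordinate $n\bmod v$ with $\mathrm{lcm}(p_1^{l_1k-1},2)\mid v$ accomplishes.

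The more serious omission is that you give no argument that the per-digit sums exhibit cancellation against every nontrivial character, i.e., that the transition data generate the full group (equivalently, that the associated matrix is primitive with the uniform measure as stationary distribution). This is precisely the hard core of the paper's proof: it requires the identity of Lemma \ref{lem:identity}, which manufactures the residue $-1$ as a value of $\ell_{p,k}$ at arguments of the form $v(p^t-1)$, the group-theoretic argument of Lemma \ref{lem:group} showing that the set $H$ of attained pairs is the whole of $(\mathbb{Z}/p^{k}\mathbb{Z})^{\times}\times(\mathbb{Z}/u\mathbb{Z})^{+}$, and then irreducibility plus a nonzero diagonal entry to conclude primitivity (Proposition \ref{prop:primitive}). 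Flagging this as ``the delicate point'' does not discharge it: without it the limiting distribution could a priori be uniform only on a coset of a proper subgroup, which would change the frequencies. As written, the proposal is an accurate reduction of the theorem to an unproved statement, not a proof.
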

Note that for $b=12, k=1$ we obtain the result of Deshouillers and Ruzsa \cite{DR11}, whereas for $b=4, k=2$ we get a slightly weaker version of the result of Deshouillers and Luca \cite{DL10}.
We can easily deduce a corollary concerning a special case of $b$, for which all the nonzero frequencies are equal.
\begin{cor} \label{cor:special}
Assume that $l_1 = 1$ and let $a \in \{1,\ldots, b^k-1\}$ be such that $b {\nmid} a$. We have
 $$f_{b,k}(a) = \frac{1}{(p_1-1)p_1^{k-1}}$$
 if $(b/p_1)^k | a$, otherwise $f_{b,k}(a) = 0$
\end{cor}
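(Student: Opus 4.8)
The plan is to obtain this directly from Theorem \ref{thm:density} by specializing to the case $l_1 = 1$. First I would observe that the divisibility condition $(b/p_1^{l_1})^k \mid a$ appearing in Theorem \ref{thm:density} becomes $(b/p_1)^k \mid a$, which is exactly the condition stated in the corollary. Thus the dichotomy between the vanishing and non-vanishing cases is preserved verbatim, and the only remaining task is to simplify the value of the frequency in the non-vanishing case.

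For that case, the key observation is that the hypotheses force $\nu_{p_1}(a) = 0$. Indeed, I would write $b = p_1 \cdot (b/p_1)$, where $b/p_1 = p_2^{l_2}\cdots p_r^{l_r}$ is coprime to $p_1$. Assuming $(b/p_1)^k \mid a$, one has in particular $b/p_1 \mid a$; combined with the standing hypothesis $b \nmid a$ and the coprimality of $p_1$ and $b/p_1$, this yields $p_1 \nmid a$, that is, $\nu_{p_1}(a) = 0$. (For general $l_1$ the same argument only gives $\nu_{p_1}(a) < l_1$; it is precisely the assumption $l_1 = 1$ that pins the exponent down to the single value $0$.)

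Finally I would substitute $l_1 = 1$ and $\nu_{p_1}(a) = 0$ into the formula of Theorem \ref{thm:density}, obtaining
$$f_{b,k}(a) = \frac{1}{(p_1-1)\cdot 1}\, p_1^{\,0 - k + 1} = \frac{1}{(p_1-1)p_1^{k-1}}.$$
I do not expect any genuine obstacle: all of the substance is already contained in Theorem \ref{thm:density}, and the corollary is a routine specialization whose only point requiring care is the elementary coprimality argument establishing $\nu_{p_1}(a) = 0$.
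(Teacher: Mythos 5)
Your proposal is correct and is exactly the intended deduction: the paper gives no separate proof of the corollary, treating it as an immediate specialization of Theorem \ref{thm:density}, and your coprimality argument showing $\nu_{p_1}(a)=0$ (from $(b/p_1)^k\mid a$, $b\nmid a$, and $\gcd(p_1,b/p_1)=1$) is precisely the one small point that needs to be supplied.
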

In the following example we examine the last nonzero digit of $n!$ with $b =720$.

\begin{ex}
Consider the sequence $\{\ell_{720}(n!)\}_{n \geq 0}$ of the last nonzero digit of $n!$ in base $$b=720 = 5\cdot 3^2\cdot 2^4.$$ We label the primes $p_1 = 5, p_2 = 3, p_3 = 2$ and the exponents $l_1 = 1, l_2 = 2, l_3 = 4$ accordingly, so that conditions (\ref{eq:primes_inequality_1}) and (\ref{eq:primes_inequality_2}) are satisfied. In fact, we have the equality
$$l_1(p_1-1) = l_2(p_2-1) = l_3(p_3-1).$$
By Theorem \ref{thm:density} the sequence $\{\ell_{720}(n!)\}_{n \geq 0}$ is not automatic, however it coincides with a $5$-automatic sequence on a set of asymptotic density 1. A computation similar as in Proposition \ref{prop:smaller_alphabet} below shows that we can define such a 5-automatic sequence by $\beta(0) = 576$ and the recurrence relation
\begin{equation}\label{eq:example_recurrence}
\beta(5n+j) = \beta(n) \cdot j! \bmod{720}
\end{equation}
for all $n \geq 0$ and $j=0,1,2,3,4$. The terms $\beta(n)$ take values in the alphabet $\Sigma = \{144,288,432,576\}$. Define a $5$-uniform morphism $\varphi: \Sigma^{*} \rightarrow \Sigma^{*}$ by
$$\varphi(x) = x_0x_1x_2x_3x_4,$$
where $x_j =  x \cdot j! \bmod{720}$.
We can explicitly write
\begin{alignat*}{4}
\varphi(144) &= 144 \;144\;288\;144\;576, \qquad  \varphi(288) &= 288\;288\;576\;288\;432, \\  \varphi(432) &= 432\;432\;144\;432\;288, \qquad \varphi(576) &= 576\;576\;432\;576\;144.
\end{alignat*}
This morphism has exactly $4$ fixed points. By (\ref{eq:example_recurrence}) it is clear that $\{\beta(n)\}_{n \geq 0}$ is the fixed point of $\varphi$ starting with 576, so it is a \emph{pure morphic sequence} (does not require a coding). 

We have $\beta(n) = \ell_{720}(n!)$ on a set of asymptotic density 1, and hence the frequency of each symbol in both sequences is the same. In particular, if $a \not\in \Sigma$, then $f_{720}(a) = 0$. To compute the frequencies of $a \in \Sigma$, consider the $4 \times 4$ incidence matrix $M$ associated with $\varphi$, which is of the form:
$$
M = \begin{bmatrix}
 3 & 0 & 1 & 1 \\
 1 & 3 & 1 & 0 \\
 0 & 1 & 3 & 1 \\
 1 & 1 & 0 & 3 \\
\end{bmatrix},
$$
where the rows and colums correspond to the elements of $\Sigma$ arranged in ascending order. All the entries of $M^2$ are positive, and therefore $\varphi$ is primitive. The matrix $\frac{1}{5}M$ is row-stochastic, thus by the discussion in Section \ref{sec:Automatic} we have $f_{720}(a) = 1/4$ for $a \in \Sigma$.
\end{ex}

In general, bases $b$ such that $\{ \ell_{b,k} (n!) \}_{n \geq 0}$ is not automatic and the nonzero frequencies $f_{b,k}(a)$ are not all equal, require considerably larger alphabets $\Sigma$ (at least when using the approach presented in Section \ref{sec:proofs}).
One can check that the method of Proposition \ref{prop:smaller_alphabet} applied to $b = 144, k=1$ produces the "smallest" such example (in terms of $|\Sigma|$)  with $|\Sigma| = 48$.

\section{Preliminaries} \label{sec:Automatic}
We recall the definition of automatic sequences in terms of uniform morphisms. For a more detailed description see \cite[Chapters 4--6]{AS03}. Let $\Sigma$ and $\Delta$ be finite alphabets and denote by $\Sigma^*$ and $\Delta^*$ the sets of finite words created from letters in $\Sigma$ and $\Delta$, respectively, together with the \emph{empty word} $\epsilon$.
We call a map $\varphi : \Sigma^*  \rightarrow \Delta^*$ a \emph{morphism} if $\varphi(xy) = \varphi(x)\varphi(y)$ for all $x,y \in \Sigma^*$. Clearly, a morphism is uniquely determined by its values on single letters in $\Sigma$. We say that a morphism is \emph{$l$-uniform} for an integer $l \geq 1$ if $|\varphi(x)| = l$ for all $x \in \Sigma$, where $|y|$ denotes the length of a word $y$. A 1-uniform morphism is called a \emph{coding}.
If $\Sigma = \Delta$ then we denote by $\varphi^i$ the $i$-th iterate of $\varphi$  (with $\varphi^0$ being the identity morphism on $\Sigma^{*}$). A morphism $\varphi$ is said to be \emph{prolongable} on $x \in \Sigma$ if  $\varphi(x) = xy$ for some $y \in \Sigma^*$ and  $\varphi^{i}(y) \neq \epsilon$ for all $i \geq 0$.
If $\varphi$ is prolongable on $x$ then the sequence of words $x, \varphi(x), \varphi^2(x),\ldots$ converges to the infinite word  $$\varphi^{\omega}(x) = x\,y \,\varphi(y)\, \varphi^2(y) \cdots$$ in the sense that each $\varphi^i(x)$ is a prefix of $\varphi^{\omega}(x)$ for $i \geq 0$. We can naturally extend $\varphi$ to infinite words over $\Sigma$. Then one can check that $\varphi^{\omega}(x)$ is a \emph{fixed point} of $\varphi$, that is $\varphi(\varphi^{\omega}(x)) = \varphi^{\omega}(x)$. Moreover, it is the unique fixed point of $\varphi$ starting with $x$. A sequence $\{\alpha(n)\}_{n \geq 0}$ with values in $\Delta$, treated as an infinite word, is called a \emph{morphic sequence} if $\{\alpha(n)\}_{n \geq 0} = \uptau(\varphi^{\omega}(x))$ for some morphism $\varphi$ prolongable on $x$ and a coding $\uptau: \Sigma \rightarrow \Delta$. We call $\{\alpha(n)\}_{n \geq 0}$ an \emph{$l$-automatic sequence} if we can choose the morphism $\varphi$ to be $l$-uniform. To prove that a sequence $\{\beta(n)\}_{n \geq 0}$ is a fixed point of an $l$-uniform morphism $\varphi$ it is necessary and sufficient that for all $n \geq 0$ we have
$$\varphi(\beta(n)) = \beta(ln) \beta(ln+1) \cdots \beta((l+1)n-1).$$

The asymptotic  frequency of a letter in an automatic sequence does not always exist (see \cite[Example 8.1.2]{AS03}). To guarantee its existence it is enough to assume that the morphism $\varphi$ generating the sequence is \emph{primitive}, that is, there exists an integer $i \geq 1$ such that for all $x, y \in \Sigma$ the letter $y$ appears in $\varphi^i(x)$. Below we give an overview of a general method of finding the frequencies of symbols in $\varphi^{\omega}(x)$, where $\varphi$ is prolongable on $x \in \Sigma$. Once these frequencies are known, it is straightforward, given a coding $\uptau: \Sigma^{*} \rightarrow \Delta^{*}$, to compute the frequency of each $a \in \Delta$ in the infinite word $\uptau(\varphi^{\omega}(x))$. For more details and examples see \cite[Chapter 8]{AS03}. 

Let $\Sigma = \{a_1,\ldots,a_d\}$. For any word $w \in \Sigma^{*}$ denote by $|w|_{a_i}$ the number of ocurrences of $a_i$ in $w$. We associate with the morphism $\varphi$ the $d \times d$ \emph{incidence matrix} $M = [m_{i,j}]_{1 \leq i, j \leq d}$, 
where $m_{i,j} = |\varphi(a_j)|_{a_i}$. One can show that for any word $w \in \Sigma^{*}$ and integer $n \geq 1$ we have

$$\begin{bmatrix}
|\varphi^n(w)|_{a_1} \\
\vdots \\
|\varphi^n(w)|_{a_d}
\end{bmatrix} 
= M^n 
\begin{bmatrix}
|w|_{a_1} \\
\vdots \\
|w|_{a_d}
\end{bmatrix}.$$
The task of finding frequencies of letters in $\varphi^{\omega}(x)$ essentialy comes down to studying the behavior of $M^n$ as $n$ tends to infinity. A nonnegative square matrix $D$ is called \emph{primitive} if there exists an integer $n \geq 1$ such that $D^n$ has all entries positive. It is easy to see that $\varphi$ is primitive if and only if its incidence matrix $M$ is primitive. By \cite[Theorem 8.4.7]{AS03}, if $\varphi$ is a primitive morphism, then the frequencies of all letters in $\varphi^{\omega}(x)$ exist and are nonzero. Moreover, the vector of frequencies is the positive normalized eigenvector associated with the Perron--Frobenius eigenvalue of the incidence matrix $M$ (where the $i$-th entry of the vector corresponds to $a_i$). If additionally $M$ is a row-stochastic matrix multiplied by $C > 0$, then its Perron--Frobenius eigenvalue is equal to $C$ and the frequencies of $a_1, \ldots, a_d$ are all equal to $1/d$.

\section{Proofs}\label{sec:proofs}

Let us write the prime factorization of $b$:
$$b = p_1^{l_1} \cdots p_r^{l_r}$$
and denote $q_i = p_i^{l_i}$ for $i = 1,\ldots,r$. 
To begin with, in Lemmas \ref{lem:two_bases} and \ref{lem:dens_1} we show that if $b$ has at least two prime factors, then with the choice of $p_1$ satisfying (\ref{eq:primes_inequality_1}) and (\ref{eq:primes_inequality_2}),  the value $\ell_{b,k}(n!)$ depends only on $\ell_{q_1,k}(n!)$ and $\nu_{q_1}(n!)$ for almost all $n \geq 0$.

\begin{lem}\label{lem:two_bases}
Let $b$ have at least two distinct prime factors and let $m \geq 1$ be an integer. Assume that 
\begin{equation}\label{eq:val_ineq}
\nu_{q_1}(m) +k \leq \nu_{q_i}(m)
\end{equation}
 for all $i=2,\ldots,r$. Then
\begin{equation} \label{eq:two_bases}
\ell_{b,k}(m) = \ell_{q_1,k}(m)q^k t^{k+\nu_{q_1}(m) } \bmod b^k,
\end{equation}
where $q = b/q_1$ and $t$ is the multiplicative inverse of $q$ modulo $q_1^k.$
\end{lem}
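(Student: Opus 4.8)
The plan is to reduce the claimed identity (\ref{eq:two_bases}) to a routine verification via the Chinese Remainder Theorem. Since $q_1, q_2, \ldots, q_r$ are powers of distinct primes, they are pairwise coprime, so $b^k = q_1^k q_2^k \cdots q_r^k$ is a factorization into pairwise coprime parts. It therefore suffices to check that the two sides of (\ref{eq:two_bases}) agree modulo each $q_i^k$. Before doing so, I would first record that the hypothesis forces $\nu_b(m) = \nu_{q_1}(m)$: because the $q_i$ are coprime, $\nu_b(m) = \min_{1 \le i \le r} \nu_{q_i}(m)$, and (\ref{eq:val_ineq}) says this minimum is attained (uniquely) at $i=1$. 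Writing $v_1 := \nu_{q_1}(m)$, this gives $\ell_{b,k}(m) = (m/b^{v_1}) \bmod b^k$ with $b^{v_1} \mid m$.

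Next I would dispose of the factors $q_i^k$ for $i \geq 2$. For the right-hand side this is immediate: $q = q_2 \cdots q_r$ is divisible by $q_i$, so $q^k \equiv 0 \pmod{q_i^k}$ and the entire product vanishes modulo $q_i^k$. For the left-hand side I would compute $\nu_{q_i}(m/b^{v_1}) = \nu_{q_i}(m) - v_1 \geq k$, using (\ref{eq:val_ineq}); this yields $q_i^k \mid m/b^{v_1}$, so the left-hand side vanishes modulo $q_i^k$ as well.

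It then remains to match the two sides modulo $q_1^k$. Writing $m/b^{v_1} = (m/q_1^{v_1})/q^{v_1}$ and reducing modulo $q_1^k$, we have $m/q_1^{v_1} \equiv \ell_{q_1,k}(m) \pmod{q_1^k}$, and since $\gcd(q,q_1)=1$, dividing by $q^{v_1}$ amounts to multiplying by $t^{v_1}$; hence the left-hand side is $\ell_{q_1,k}(m)\, t^{v_1} \pmod{q_1^k}$. On the right-hand side, $q^k t^k = (qt)^k \equiv 1 \pmod{q_1^k}$, so it reduces to $\ell_{q_1,k}(m)\, q^k t^{k+v_1} \equiv \ell_{q_1,k}(m)\, t^{v_1} \pmod{q_1^k}$, which agrees. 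By the Chinese Remainder Theorem the two sides, both reduced modulo $b^k$, coincide, proving (\ref{eq:two_bases}).

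The \emph{main obstacle} is that each $q_i = p_i^{l_i}$ is a prime power rather than a prime, so $\nu_{q_i}$ is not an additive valuation (it is essentially a floor of $\nu_{p_i}/l_i$) and cannot be manipulated freely. The care required is to factor out only exact powers of $q_i$ multiplied by quantities coprime to $p_i$, for which the identities $\nu_{q_i}(q_i^j y) = j + \nu_{q_i}(y)$ and $\nu_{q_i}(q_j^s y) = \nu_{q_i}(y)$ for $j \neq i$ do hold; the divisibility statements $b^{v_1} \mid m$ and $q_i^k \mid m/b^{v_1}$ must each be justified from these rather than assumed.
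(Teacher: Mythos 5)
Your proposal is correct and follows essentially the same route as the paper: both arguments reduce the identity modulo the coprime factors $q^k$ and $q_1^k$ (deriving $\ell_{b,k}(m)\equiv 0 \pmod{q^k}$ and $\ell_{b,k}(m)\equiv t^{\nu_{q_1}(m)}\ell_{q_1,k}(m)\pmod{q_1^k}$ from $\nu_b(m)=\nu_{q_1}(m)$) and conclude by the Chinese Remainder Theorem. Your explicit care with the non-additivity of $\nu_{q_i}$ is a nice touch, but the substance of the argument is the same.
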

\begin{proof}
We have
\begin{equation} \label{eq:base_b}
m = b^{\nu_b(m)}(b^kl + \ell_{b,k}(m)),
\end{equation}
where $l$ is some nonnegative integer. By the assumption (\ref{eq:val_ineq}) we have
$$0 \leq  \nu_{q_1}(b^kl + \ell_{b,k}(m)) \leq  \nu_{q_i}(b^kl + \ell_{b,k}(m)) - k,$$
for each $i = 2, \ldots, r$, which implies that $\nu_{q_i}(\ell_{b,k}(m))\geq k$. Therefore, we obtain
\begin{equation} \label{eq:chinese_1}
 \ell_{b,k}(m) \equiv 0 \pmod{q^k}.
 \end{equation}
  Now let 
\begin{equation} \label{eq:base_p}
m = q_1^{\nu_{q_1}(m)}(q_1^k j + \ell_{q_1,k}(m)),
\end{equation}
where $j$ is some nonnegative integer.
Again, by (\ref{eq:val_ineq}) we have $\nu_{b} (m) = \nu_{q_1}(m)$. Comparing (\ref{eq:base_b}) and (\ref{eq:base_p}) we get
$$q^{\nu_{q_1}(m)} \ell_{b,k}(m) \equiv \ell_{q_1,k}(m) \pmod{q_1^k},$$
or, equivalently,
\begin{equation} \label{eq:chinese_2}
\ell_{b,k}(m) \equiv t^{\nu_{q_1}(m)}  \ell_{q_1,k}(m) \pmod{q_1^k}.
\end{equation}
By applying the Chinese Remainder Theorem to the system of congruences (\ref{eq:chinese_1}) and (\ref{eq:chinese_2}), we finally obtain the result.
\end{proof}

Assume that the primes $p_1,\ldots,p_r$ are numbered in such a way that conditions (\ref{eq:primes_inequality_1}) and (\ref{eq:primes_inequality_2}) are satisfied.  Recall that for $p$ prime and $n \geq 0$ we have Legendre's formula:
$$\nu_p(n!) = \frac{n - s_p(n)}{p-1},$$
where $s_p(n)$ denotes the sum of digits of the base-$p$ expansion of $n$. The following lemma shows that for $m=n!$ the assumption of Lemma \ref{lem:two_bases} is satisfied on a set of $n$ of asymptotic density $1$.

\begin{lem} \label{lem:dens_1}
If $b$ has  at least two distinct prime factors, then the set 
$$
S_{b,k} = \left\{ n \geq 0: \nu_{q_i}(n!) \geq \nu_{q_1}(n!) + k  \text{ for } i=2,\ldots,r \right\}
$$
has asymptotic density 1. If furthermore $l_1(p_1 - 1) > l_2(p_2 - 1)$, then $S_{b,k}$ contains all but a finite number of nonnegative integers.
\end{lem}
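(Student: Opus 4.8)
The plan is to translate the valuation inequalities defining $S_{b,k}$ into inequalities about base-$p$ digit sums via Legendre's formula, and then to separate the indices $i \geq 2$ according to whether $l_i(p_i-1)$ is strictly smaller than, or equal to, $l_1(p_1-1)$. Writing $c_i = l_i(p_i-1)$ and recalling $q_i = p_i^{l_i}$, we have
$$\nu_{q_i}(n!) = \left\lfloor \frac{\nu_{p_i}(n!)}{l_i} \right\rfloor = \left\lfloor \frac{n - s_{p_i}(n)}{c_i} \right\rfloor,$$
and by (\ref{eq:primes_inequality_1}) the $c_i$ satisfy $c_1 \geq c_2 \geq \cdots \geq c_r$. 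Since $S_{b,k} = \bigcap_{i=2}^{r} A_i$ with $A_i = \{n : \nu_{q_i}(n!) \geq \nu_{q_1}(n!) + k\}$, and a finite intersection of density-$1$ sets has density $1$, it suffices to show that each $A_i$ has density $1$, and is cofinite when $c_i < c_1$.

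First I would dispose of the indices with $c_i < c_1$. Using $\lfloor x \rfloor > x-1$ and the crude bound $s_{p_i}(n) = O(\log n)$, one gets
$$\nu_{q_i}(n!) - \nu_{q_1}(n!) \geq \frac{n - s_{p_i}(n)}{c_i} - 1 - \frac{n - s_{p_1}(n)}{c_1} \geq \left(\frac{1}{c_i} - \frac{1}{c_1}\right)n - O(\log n).$$
As $1/c_i - 1/c_1$ is a positive constant, the right-hand side tends to $+\infty$, so $A_i$ contains all sufficiently large $n$, i.e.\ is cofinite. This already settles the second assertion: if $l_1(p_1-1) > l_2(p_2-1)$, that is $c_1 > c_2$, then $c_i \leq c_2 < c_1$ for every $i \geq 2$, so every $A_i$ is cofinite and hence so is $S_{b,k}$.

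It remains to treat an index $i$ with $c_i = c_1 =: c$, and this is where the real work lies. By (\ref{eq:primes_inequality_2}) the prime $p_1$ is maximal among those with $c_j = c_1$, and as the primes are distinct we have $p_i < p_1$. The same floor estimate now yields
$$\nu_{q_i}(n!) - \nu_{q_1}(n!) \geq \frac{s_{p_1}(n) - s_{p_i}(n)}{c} - 1,$$
so $n \in A_i$ whenever $s_{p_1}(n) - s_{p_i}(n) \geq (k+1)c$, and it is enough to show this digit-sum inequality holds on a set of density $1$. The hard part is precisely this comparison of digit sums in two \emph{different} bases. I would invoke the classical concentration fact that, for each fixed prime $p$ and each $\epsilon > 0$, the set of $n$ with $\left| s_p(n) - \tfrac{p-1}{2}\log_p n \right| \leq \epsilon \log_p n$ has density $1$; this follows from Chebyshev's inequality, since for $n$ below a power $p^L$ the base-$p$ digits are i.i.d.\ uniform on $\{0,\ldots,p-1\}$. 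Applying it to $p_1$ and $p_i$ with a small $\epsilon$, on a density-$1$ set we obtain
$$s_{p_1}(n) - s_{p_i}(n) \geq \left(\frac{p_1-1}{2\ln p_1} - \frac{p_i-1}{2\ln p_i} - \epsilon\Big(\tfrac{1}{\ln p_1} + \tfrac{1}{\ln p_i}\Big)\right)\ln n.$$
The key analytic point is that $g(p) = (p-1)/\ln p$ is strictly increasing for $p \geq 2$ (because $\ln p > 1 - 1/p$), so since $p_i < p_1$ the leading coefficient $\tfrac12\big(g(p_1) - g(p_i)\big)$ is strictly positive; choosing $\epsilon$ small keeps the whole coefficient positive. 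Hence $s_{p_1}(n) - s_{p_i}(n) \geq \delta \ln n$ for some $\delta > 0$ on a density-$1$ set, and as $\delta \ln n \geq (k+1)c$ once $n \geq n_0$, intersecting with the cofinite set $\{n \geq n_0\}$ shows $n \in A_i$ on a density-$1$ set. Intersecting the finitely many $A_i$ then gives that $S_{b,k}$ has density $1$.
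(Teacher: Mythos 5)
Your proof is correct and follows the same strategy as the paper: split the indices $i \geq 2$ according to whether $l_i(p_i-1)$ is strictly smaller than or equal to $l_1(p_1-1)$, dispose of the strict case via Legendre's formula (which also yields the cofiniteness assertion), and reduce the equality case to the digit-sum inequality $s_{p_1}(n) - s_{p_i}(n) \geq \delta \log n$ on a set of density one. The only difference is that the paper outsources this last inequality to \cite[Lemma 3]{DR11}, whereas you prove it yourself via the concentration of $s_p(n)$ around $\tfrac{p-1}{2}\log_p n$ combined with the strict monotonicity of $(p-1)/\ln p$ --- which is exactly where the hypothesis (\ref{eq:primes_inequality_2}), i.e.\ $p_i < p_1$, enters --- and this is the same idea underlying the cited lemma, so the two arguments are essentially identical.
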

\begin{proof}
 First, consider the case $l_1(p_1 - 1) > l_2(p_2 - 1)$. Fix any $2 \leq i \leq r$. By Legendre's formula we obtain
$$\lim_{n \to \infty} \frac{\nu_{q_i}(n!)}{\nu_{q_1}(n!)} = \frac{l_1(p_1 - 1)}{l_i(p_i - 1)} > 1,$$
and therefore all $n$ sufficiently large are in $S_{b,k}$.

Now consider the case $l_1(p_1-1) = l_i(p_i-1)$ for $i=2,\ldots,s$. By \cite[Lemma 3]{DR11}  there exists $\delta_i > 0$ such that the set of nonnegative integers $n$ satisfying
$$s_{p_i}(n) \leq s_{p_1}(n) - \delta_i \log n$$
has asymptotic density 1. For such $n$ we obtain
$$\nu_{q_i}(n!) = \left\lfloor\frac{n - s_{p_i}(n)}{l_i(p_i-1)} \right\rfloor \geq \left\lfloor\frac{n - s_{p_1}(n)+\delta_i \log n}{l_1(p_1-1)} \right\rfloor \geq \nu_{q_1}(n!) + \left\lfloor\frac{\delta_i \log n}{l_1(p_1-1)}\right\rfloor.$$
If we additionally take $n$ sufficiently large, then $n \in S_{b,k}$.
\end{proof}

When $b$ is a prime power we put $S_{b,k} = \mathbb{N}$. Then the formula (\ref{eq:two_bases}) is satisfied with $m = n!$ for any base $b \geq 2$ and all $n \in S_{b,k}$. We can further factorize the right side of (\ref{eq:two_bases}) into simpler terms.
 It is  easy to check that 
  $$\ell_{q_1,k}(m) = \ell_{p_1,l_1k}(m)p_1^{\nu_{p_1}(m) \bmod l_1} \bmod{q_1^k}.$$
If we additionally denote by $s$ the multiplicative order of $q$ modulo $q_1^k$, then the contribution of $\nu_{q_1}(m)$ to (\ref{eq:two_bases}) depends only on its residue modulo $s$. Using these observations, we can rewrite (\ref{eq:two_bases}) with $m = n!$ as
\begin{equation} \label{eq:two_bases_2}
\ell_{b,k}(n!) = \ell_{p_1,l_1k}(n!)p_1^{\nu_{p_1}(n!) \bmod l_1}q^k t^{k+ (\lfloor \nu_{p_1}(n!) / l_1 \rfloor \bmod{s})} \bmod b^k,
\end{equation}
whenever $n \in S_{b,k}$.

We will now consider the family of sequences 
$$
\{\alpha_{p,k,u,v}(n)\}_{n \geq 0}  = \{ (\ell_{p,k} (n!), \nu_p(n!) \bmod u, n \bmod v )\}_{n \geq 0},$$
where $p$ is a prime number, $k, u \geq 1$ are integers and $v$ is a positive integer divisible by $\mathrm{lcm} (p^{k-1},u,2)$. In the particular case $p=2, k=2$ we additionally require that $v$ is divisible by $4$. 
The motivation for this approach is that for $p=p_1,k'=l_1k,u=l_1s$ and $n \in S_{b,k}$ we can express $\ell_{b,k}(n!)$ by a coding of $\alpha_{p,k',u,v}(n)$, as seen in  (\ref{eq:two_bases_2}). The term $n \bmod v$ does not appear in the coding itself, however it plays a vital role in the recurrence relations describing the terms $\alpha_{p,k,u,v}(n)$. The advantage of considering general $u$ and $v$, instead of immediately focusing on $\{\ell_{b,k}(n!)\}_{n \geq 0}$, is that it allows to study this sequence along any arithmetic progression.

Below we briefly outline the structure of our reasoning.
We first prove in Proposition \ref{prop:morphism} that the sequence $\{\alpha_{p,k,u,v}(n)\}_{n \geq 0}$, treated as an infinite word, is a fixed point of a $p$-uniform morphism. Next, in Proposition \ref{prop:primitive} we show that this morphism is primitive. Proposition \ref{prop:equal_dens} uses the method described in Section \ref{sec:Automatic} to prove  that each of the possible values of $\alpha_{p,k,u,v}(n)$ appears with equal frequency. 
Combined with the relation between $\alpha_{p,k',u,v}(n)$ and $\ell_{b,k}(n!)$, this finally leads to the results of Section \ref{sec:Main}.

We start with deriving recurrence relations for $\alpha_{p,k,u,v}(n)$. For a fixed integer $c \geq 2$ we will use the notation
$$n_c! = 
\begin{cases}
\hspace{10pt} 1 &\text{ if } n=0, \\
\prod\limits_{\underset{\mathrm{gcd}(c,m) = 1}{1 \leq m \leq n}} \hspace{-2pt} m &\text{ if } n>0.
\end{cases}$$
This function is sometimes called the \emph{Gauss factorial} (see \cite{CD11} for some of its interesting properties). 
We begin with an auxiliary lemma.
\begin{lem} \label{lem:product}
We have
$$
(p^k)_{p}! \equiv
 \begin{cases}
 1  \pmod{p^k} & \text{ if } p=2 \text{ and } k \neq 2 , \\
 -1 \pmod{p^k} & \text{ otherwise. }
 \end{cases}
$$
%
%
\end{lem}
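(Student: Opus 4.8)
The plan is to read $(p^k)_p!$ as the product, taken modulo $p^k$, of all units of $\mathbb{Z}/p^k\mathbb{Z}$, and then run the pairing argument underlying Wilson's theorem; this is precisely the prime-power case of the Gauss--Wilson generalization, but I would prove it from scratch. The integers $m$ with $1 \le m \le p^k$ and $\gcd(p,m)=1$ form a complete set of representatives of the group $(\mathbb{Z}/p^k\mathbb{Z})^{*}$, so $(p^k)_p! \equiv \prod_{x} x \pmod{p^k}$, the product ranging over these units. Pairing each $x$ with its multiplicative inverse $x^{-1}$, every pair with $x \neq x^{-1}$ contributes a factor $x x^{-1} \equiv 1$; hence the entire product collapses, modulo $p^k$, to the product of the self-inverse units, that is, of the solutions of $x^2 \equiv 1 \pmod{p^k}$.

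It then remains to list these square roots of unity and multiply them. For odd $p$, writing $x^2 - 1 = (x-1)(x+1)$ and noting that $\gcd(x-1,x+1)$ divides $2$ and is thus coprime to $p$, the divisibility $p^k \mid (x-1)(x+1)$ forces $x \equiv \pm 1 \pmod{p^k}$, and the two solutions multiply to $-1$. For $p=2$ the same factorisation, combined with the fact that exactly one of $x \pm 1$ is $\equiv 2 \pmod 4$, shows that for $k \ge 3$ there are exactly four solutions, namely $\pm 1$ and $2^{k-1} \pm 1$; their product is $-(2^{k-1}-1)(2^{k-1}+1) = 1 - 2^{2k-2} \equiv 1 \pmod{2^k}$, since $2k-2 \ge k$. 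The small cases are handled by inspection: for $k=1$ the product is empty, hence $\equiv 1 \equiv -1 \pmod 2$, and for $k=2$ the only solutions are $1$ and $3$, with product $3 \equiv -1 \pmod 4$.

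Collecting the cases yields the stated dichotomy: the product is $\equiv -1 \pmod{p^k}$ for every odd $p$ and for $p=2,\,k=2$, and $\equiv 1 \pmod{p^k}$ for $p=2$ with $k \neq 2$ (the case $k=1$ being vacuous, as $1 \equiv -1 \pmod 2$). The only genuine subtlety, and the step I would treat most carefully, is the $p=2,\,k \ge 3$ analysis: one must verify that the surplus power of $2$ beyond the obvious roots $\pm 1$ splits off the two extra roots $2^{k-1} \pm 1$, and that these four residues are distinct modulo $2^k$ exactly when $k \ge 3$ — they coincide in pairs for $k \le 2$, which is precisely why those cases land on the $-1$ side and account for the exceptional value $k=2$ in the statement.
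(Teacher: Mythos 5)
Your proof is correct and follows exactly the route the paper indicates: it reduces $(p^k)_p!$ to the product of the self-inverse units modulo $p^k$ (the Wilson/Gauss pairing) and then enumerates the square roots of unity, with the $p=2$, $k\ge 3$ case giving the four roots $\pm 1$, $2^{k-1}\pm 1$ whose product is $1-2^{2k-2}\equiv 1$. The paper omits the details, stating only that the proof "relies on the fact that the product of all elements in a finite abelian group is equal to the product of the elements of order two," and your write-up is a faithful and complete elaboration of that sketch (the only cosmetic slip is calling the $k=1$ product ``empty'' when it is the singleton product $\{1\}$, which changes nothing).
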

The proof is straightforward and relies on the fact that the product of all elements in a finite abelian group is equal to the product of the elements of order two.
The following lemma gives a recurrence relation satisfied by the terms $\ell_{p,k}(n!)$.
\begin{lem} \label{lem:recurrence}
For all $n \geq 0$ and $i = 0,\ldots,p-1$ we have
\begin{equation} \label{eq:recurrence}
\ell_{p,k}\left((pn+i)!\right) = \ell_{p,k}(n!) (pn+i)_{p}! \bmod p^k.
\end{equation}
\end{lem}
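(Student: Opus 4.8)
The plan is to prove the recurrence \eqref{eq:recurrence} by separating the factorial $(pn+i)!$ into the part coprime to $p$ and the part divisible by $p$, then tracking how each piece contributes to the last $k$ nonzero base-$p$ digits. First I would write
\begin{equation*}
(pn+i)! = \left( \prod_{\substack{1 \leq m \leq pn+i \\ \gcd(p,m)=1}} m \right) \cdot \left( \prod_{\substack{1 \leq m \leq pn+i \\ p \mid m}} m \right) = (pn+i)_p! \cdot p^{\lfloor (pn+i)/p \rfloor} \cdot \left\lfloor \frac{pn+i}{p} \right\rfloor !,
\end{equation*}
where the second factor collects the multiples $p, 2p, \ldots$ by pulling out one factor of $p$ from each. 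Since $0 \leq i \leq p-1$, we have $\lfloor (pn+i)/p \rfloor = n$, so this reads $(pn+i)! = (pn+i)_p! \cdot p^{n} \cdot n!$.

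Next I would apply the operator $\ell_{p,k}$, which by definition strips off all powers of $p$ and reduces modulo $p^k$. The factor $p^n$ is a pure power of $p$ and therefore contributes nothing to the nonzero digits: it only shifts the base-$p$ expansion. Likewise $(pn+i)_p!$ is by construction coprime to $p$, so it carries no power of $p$. Hence $\nu_p((pn+i)!) = n + \nu_p(n!)$, and the nonzero part of $(pn+i)!$ is the nonzero part of $n!$ multiplied by $(pn+i)_p!$. Concretely, using $\ell_{p,k}(m) = (m / p^{\nu_p(m)}) \bmod p^k$, I would compute
\begin{equation*}
\ell_{p,k}\bigl((pn+i)!\bigr) = \frac{(pn+i)!}{p^{\nu_p((pn+i)!)}} \bmod p^k = \frac{n!}{p^{\nu_p(n!)}} \cdot (pn+i)_p! \bmod p^k = \ell_{p,k}(n!)\,(pn+i)_p! \bmod p^k,
\end{equation*}
which is exactly \eqref{eq:recurrence}. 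The key algebraic point is simply that multiplication by the $p$-free quantity $(pn+i)_p!$ commutes with reduction modulo $p^k$ after the $p$-adic valuation has been extracted.

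The main obstacle, such as it is, lies in being careful that no hidden power of $p$ sneaks in and that the reduction modulo $p^k$ is applied consistently. Specifically, one must verify that $\ell_{p,k}(n!)$, already reduced mod $p^k$, can be multiplied by $(pn+i)_p!$ and reduced again without loss, which is valid precisely because both factors are coprime to $p$ (so their product's $p$-free part is the product of their $p$-free parts, and reduction mod $p^k$ is a ring homomorphism on the multiplicative structure coprime to $p$). The identity $\lfloor (pn+i)/p \rfloor = n$ for $0 \leq i \leq p-1$ must also be used to guarantee that the recursive term is exactly $n!$ and the extracted power is exactly $p^n$; this is where the constraint on the range of $i$ enters. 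No appeal to Lemma \ref{lem:product} is needed here, as that result governs the structure of the Gauss factorials themselves rather than the recurrence.
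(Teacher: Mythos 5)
Your proof is correct, but it takes a genuinely different route from the paper's. The paper proves the recurrence by induction on $n$: it checks the base case $n=0$, $i=0$, then passes from $(pn)!$ to $(p(n+1))!$ by multiplying in the block $(pn+1)\cdots(pn+p)$, using multiplicativity of $\ell_{p,k}$ modulo $p^k$, absorbing the factors coprime to $p$ into the Gauss factorial and $\ell_{p,k}(pn+p)=\ell_{p,k}(n+1)$ into $\ell_{p,k}((n+1)!)$; the case $i>0$ then follows because $pn+1,\dots,pn+p-1$ are coprime to $p$. You instead establish the exact integer identity $(pn+i)! = (pn+i)_{p}!\cdot p^{n}\cdot n!$ in one stroke (valid since the multiples of $p$ in $\{1,\dots,pn+i\}$ are exactly $p,2p,\dots,np$ when $0\le i\le p-1$) and read off the recurrence by extracting the $p$-part and reducing modulo $p^k$. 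Your argument is self-contained, avoids induction, and yields $\nu_p((pn+i)!)=\nu_p(n!)+n$ as a byproduct, a fact the paper re-derives separately via Legendre's formula in the proof of Proposition \ref{prop:morphism}; the paper's inductive computation is closer in spirit to the morphism machinery it feeds into, but proves nothing your decomposition does not. You are also right that Lemma \ref{lem:product} plays no role here --- the paper's own proof does not invoke it either.
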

\begin{proof}
Obviously, (\ref{eq:recurrence}) is true for $n=0$ and $i=0$. By induction on $n$ we can compute that
\begin{align*}
\ell_{p,k}\left((p(n+1))!\right) &\equiv \ell_{p,k}\left((pn)!\right) \ell_{p,k}(pn+p) \prod_{i=1}^{p-1} \ell_{p,k}(pn+i) \\
&\equiv \ell_{p,k}(n!) (pn)_{p}! \,\ell_{p,k}(n+1) \prod_{i=1}^{p-1} (pn+i) \\
&\equiv \ell_{p,k}((n+1)!) (p(n+1))_{p}! \pmod{p^k}.
\end{align*}
This ends the proof for $i=0$. Since none of the numbers $pn+1, \ldots, pn+p-1$ is divisible by $p$, the result for $i > 0$ follows.
\end{proof}

We are now ready to show that the sequence $\{\alpha_{p,k,u,v}(n)\}_{n \geq 0}$ is a fixed point of a $p$-uniform morphism. The terms $\alpha_{p,k,u,v}(n)$ take values in the alphabet
$$\Lambda_{p,k,u,v} = (\mathbb{Z}/p^k \mathbb{Z})^{\times} \times(\mathbb{Z}/u \mathbb{Z})^{+} \times \{0,1,\ldots,v-1\},$$
where $(\mathbb{Z}/p^k \mathbb{Z})^{\times}$ and $(\mathbb{Z}/u \mathbb{Z})^{+}$ denote the multiplicative group modulo $p^k$ and the additive group modulo $u$, respectively. It is convenient to treat the first two coordinates of $(x,y,z) \in \Lambda_{p,k,u,v}$ as elements of a group, as will be seen in the proof of Lemma \ref{lem:group}. 
Define a $p$-uniform morphism $\psi_{p,k,u,v}: \Lambda_{p,k,u,v}^{*} \rightarrow \Lambda_{p,k,u,v}^{*}$ as follows:
$$\psi_{p,k,u,v}(x,y,z) = (x_0,y_0,z_0)(x_1,y_1,z_1)\cdots(x_{p-1},y_{p-1},z_{p-1}),$$ 
where 
 \begin{align*}
 x_i &= x (pz+i)_{p}! \bmod p^{k}, \\ 
 y_i &= y + z \bmod u, \\
 z_i &= pz+ i \bmod v.
 \end{align*} 
We have the following:
\begin{prop}\label{prop:morphism}
The sequence $\{\alpha_{p,k,u,v}(n)\}_{n \geq 0}$ is the fixed point of $\psi_{p,k,u,v}$ starting with $(1,0,0)$.
\end{prop}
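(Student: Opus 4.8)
The paper gives a clean criterion: to show that $\{\beta(n)\}$ is a fixed point of a $p$-uniform morphism $\psi$, it suffices (and is necessary) to verify that $\psi(\beta(n)) = \beta(pn)\beta(pn+1)\cdots\beta(pn+p-1)$ for all $n \geq 0$. So my plan is to take $\beta(n) = \alpha_{p,k,u,v}(n)$ and verify this coordinate-wise identity directly, then separately check the initial condition $\alpha(0) = (1,0,0)$, which is immediate since $0! = 1$, $\nu_p(0!) = 0$, and $0 \bmod v = 0$.

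So I need to prove the three component identities.

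**Plan:**

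The plan is to apply the fixed-point criterion from Section 3, namely that it suffices to show
$$\psi_{p,k,u,v}(\alpha_{p,k,u,v}(n)) = \alpha_{p,k,u,v}(pn)\,\alpha_{p,k,u,v}(pn+1)\cdots\alpha_{p,k,u,v}(pn+p-1)$$
for every $n \geq 0$, together with the initial value $\alpha_{p,k,u,v}(0) = (1,0,0)$. The latter is immediate from $0! = 1$, $\nu_p(0) = 0$, and $0 \bmod v = 0$. For the former, writing $\alpha_{p,k,u,v}(n) = (x,y,z)$ with $x = \ell_{p,k}(n!)$, $y = \nu_p(n!) \bmod u$, and $z = n \bmod v$, I must check, for each $i = 0,\ldots,p-1$, that the $i$-th letter $(x_i,y_i,z_i)$ of $\psi_{p,k,u,v}(x,y,z)$ equals $\alpha_{p,k,u,v}(pn+i)$ in all three coordinates.

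The first coordinate is the heart of the argument and follows from the recurrence in Lemma 4: since $\ell_{p,k}((pn+i)!) \equiv \ell_{p,k}(n!)\,(pn+i)_p! \pmod{p^k}$, the value $\ell_{p,k}((pn+i)!)$ depends on $n$ only through $x = \ell_{p,k}(n!)$ and through $z = n \bmod v$ (needed to evaluate the Gauss factorial $(pn+i)_p!$). This is precisely where the hypothesis $p^{k-1} \mid v$ (and $4 \mid v$ when $p=2,k=2$) is needed: one must argue that $(pn+i)_p! \bmod p^k$ is determined by $n \bmod v$, which requires that the product of a full block of $p^k$ consecutive $p$-coprime integers is $\equiv \pm 1$ (Lemma 3) and that the relevant residue period of the partial Gauss factorial divides $v$. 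I would make this reduction explicit and then read off $x_i = x\,(pz+i)_p! \bmod p^k$. The second coordinate uses Legendre's formula in the incremental form $\nu_p((pn+i)!) = \nu_p(n!) + n$ for $i = 0,\ldots,p-1$ (since exactly the multiples of $p$ up to $pn$ contribute the extra $n$, and none of $pn+1,\ldots,pn+i$ is divisible by $p$); reducing mod $u$ gives $y_i = y + z \bmod u$, again using $u \mid v$ so that $z \equiv n \pmod u$. The third coordinate is the trivial base-$p$ digit relation $pn+i \equiv p z + i \pmod v$, valid because $p \mid v$, giving $z_i = pz + i \bmod v$.

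The main obstacle will be the first-coordinate verification, specifically justifying cleanly that $(pn+i)_p! \bmod p^k$ depends only on $z = n \bmod v$. This is the step that forces the divisibility conditions on $v$ and that quietly invokes Lemma 3; everything else is routine substitution. I would isolate this as the key computation, splitting $(pn+i)_p!$ into complete blocks of $p^k$ consecutive coprime residues (each contributing $\pm 1 \bmod p^k$ by Lemma 3) plus a tail that is periodic in $n$ with period dividing $v$, and track the parity of the number of blocks to control the sign.
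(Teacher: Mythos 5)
Your proposal is correct and follows essentially the same route as the paper's proof: verify the fixed-point criterion coordinate-wise, use Lemma \ref{lem:recurrence} for the first coordinate and reduce $(pn+i)_{p}!$ to $(pz+i)_{p}!$ by peeling off full blocks of $p^k$ consecutive residues via Lemma \ref{lem:product} (the paper writes $n = mv+z$ and gets the factor $((p^k)_{p}!)^{mv/p^{k-1}}$, with the parity/sign issue absorbed exactly by the divisibility conditions $\mathrm{lcm}(p^{k-1},u,2) \mid v$ and $4\mid v$ when $p=2,k=2$), then Legendre's formula giving $\nu_p((pn+i)!) = \nu_p(n!)+n$ for the second coordinate and the trivial congruence $pn+i \equiv pz+i \pmod v$ for the third. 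The one step you flag as the "main obstacle" is carried out in the paper in a single displayed chain of congruences, so your plan fills in correctly with no gaps.
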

\begin{proof}
Take any $n \geq 0$ and let
$$ (x,y,z) = (\ell_{p,k}(n!), \nu_p(n!) \bmod u, n \bmod v).$$
We can write $n = mv + z$ for some integer $m \geq 0$.
Fix any $0 \leq i \leq p-1$. By Lemmas \ref{lem:product} and \ref{lem:recurrence} we have
\begin{align*}
\ell_{p,k}((pn+i)!) &\equiv \ell_{p,k}(n!) (pn+i)_{p}! \equiv x (pvm + pz + i)_{p}! \\
&\equiv x ((p^k)_{p}!)^{mv/p^{k-1}}(pz+i)_{p}! \equiv x_i \pmod{p^{k}}.
\end{align*}
Note that in the case $p=2,k=2$ we have to use the assumption that $4 | v$  because $4_4! \equiv -1 \pmod{4}$.
Furthermore, we have
$$
\nu_p((pn+i)!) = \nu_p((pn)!) = \frac{pn - s_p(pn)}{p-1} =  \nu_p(n!) +n ,
$$
and therefore $\nu_p((pn+i)!) \equiv y_i \pmod{u}.$ Obviously, $pz + i \equiv z_i \pmod{v}$, which completes the proof.
\end{proof}
Observe that if $(x',y',z')$ appears on the $i$-th position in $\psi^{m}_{p,k,u,v}(x,y,z)$ for some integer $m \geq 1$, then $(x'x'',y'+y'',z')$ appears on the $i$-th position in $\psi^m_{p,k,u,v}(xx'',y + y'',z)$. We will use this property a number of times.
Our aim is now to prove that each symbol from $\Lambda_{p,k,u,v}$ appears in the sequence $\{\alpha_{p,k,u,v}(n)\}_{n \geq 0}$ with the same frequency. First, we give some auxiliary lemmas.
 
\begin{lem} \label{lem:equivalence}
For any two symbols $(x,y,z), (x',y',z') \in \Lambda_{p,k,u,v}$ denote 
$$
(x,y,z)R(x',y',z')
$$
if there exists an integer $m \geq 0$ such that $(x',y',z')$ appears in $\psi_{p,k,u,v}^m(x,y,z)$. Then $R$ is an equivalence relation on $\Lambda_{p,k,u,v}$.
\begin{proof}
The relation $R$ is obviously transitive. We will now prove that it is symmetric. Fix any $(x,y,z), (x',y',z')  \in \Lambda_{p,k,u,v}$ such that $(x,y,z)R(x',y',z')$.
For any integer $m \geq 0$ the third coordinate of $\psi_{p,k,u,v}^m(x',y',z')$ forms a finite sequence of $p^m$ consecutive integers taken modulo $v$, so   
we have $(x',y',z') R (xx'',y+y'',z)$ for some $(x'',y'') \in (\mathbb{Z}/p^k \mathbb{Z})^{\times} \times (\mathbb{Z}/u \mathbb{Z})^{+}$. If $(x'',y'') = (1,0)$, then we are done. Otherwise, transitivity gives  $(x,y,z)R(xx'',y+y'',z)$ and by using our earlier observation one can show inductively that $(xx'',y+y'',z)R(x(x'')^n,y+ny'',z)$  for all $n \geq 2$. If we choose $n$ to be the order of $(x'',y'')$ in $(\mathbb{Z}/p^k \mathbb{Z})^{\times} \times (\mathbb{Z}/u \mathbb{Z})^{+}$, then by transitivity we get $(x',y',z')R(x,y,z)$.
Hence, the relation $R$ is an equivalence relation, since each element of $\Lambda_{p,k,u,v}$ is in relation with at least one element.
\end{proof}
\end{lem}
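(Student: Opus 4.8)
The plan is to verify the three defining properties of an equivalence relation one at a time (writing $\psi$ for $\psi_{p,k,u,v}$), expecting reflexivity and transitivity to be immediate and symmetry to be the only real content. For reflexivity I would take $m = 0$, so that $(x,y,z)$ occurs in $\psi^0(x,y,z) = (x,y,z)$ and hence $(x,y,z) R (x,y,z)$. Transitivity I would read off from $\psi$ being a morphism: if $(x',y',z')$ occurs in $\psi^m(x,y,z)$ and $(x'',y'',z'')$ occurs in $\psi^{m'}(x',y',z')$, then $\psi^{m+m'}(x,y,z) = \psi^{m'}\bigl(\psi^m(x,y,z)\bigr)$ contains $\psi^{m'}(x',y',z')$ as a factor, and therefore contains $(x'',y'',z'')$.

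The hard part will be symmetry, and my plan is to lean on the homogeneity of $\psi$ recorded just before the lemma: under iteration the first coordinate evolves by multiplication and the second by addition, with a factor and a summand that depend only on the third coordinate and on the position in the image word, never on the starting first two coordinates. So suppose $(x,y,z) R (x',y',z')$. First I would iterate $\psi$ from $(x',y',z')$ and observe that the third coordinates of $\psi^{m'}(x',y',z')$ form $p^{m'}$ consecutive integers modulo $v$; choosing $m'$ with $p^{m'} \geq v$ forces every residue, in particular $z$, to appear. By homogeneity the symbol sitting at such a position has the form $(x x'', y + y'', z)$ for some $(x'', y'') \in (\mathbb{Z}/p^k\mathbb{Z})^{\times} \times (\mathbb{Z}/u\mathbb{Z})^{+}$, where the factor and summand are expressed relative to the original $x,y$ by composing the increment carrying $(x,y,z)$ to $(x',y',z')$ with the one just produced (here the multiplicative increment lands in $(\mathbb{Z}/p^k\mathbb{Z})^{\times}$ because each Gauss factorial is coprime to $p$). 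This gives $(x',y',z') R (x x'', y + y'', z)$, whence $(x,y,z) R (x x'', y + y'', z)$ by transitivity.

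It then remains to erase the residual shift $(x'', y'')$, and here I would run a group-order argument. If $(x'',y'') = (1,0)$ we are done; otherwise, reapplying the homogeneity observation to the relation $(x,y,z) R (x x'', y + y'', z)$ --- whose endpoints share the third coordinate $z$ --- gives inductively that $(x(x'')^{n-1}, y + (n-1)y'', z) R (x(x'')^{n}, y + n y'', z)$ for every $n \geq 1$. Taking $n$ to be the order of $(x'', y'')$ in the finite abelian group $(\mathbb{Z}/p^k\mathbb{Z})^{\times} \times (\mathbb{Z}/u\mathbb{Z})^{+}$ collapses the target back to $(x,y,z)$, so that $(x x'', y + y'', z) R (x,y,z)$; chaining this with $(x',y',z') R (x x'', y + y'', z)$ through transitivity yields $(x',y',z') R (x,y,z)$, which is symmetry. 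I expect the only delicate point to be making the homogeneity bookkeeping precise --- namely confirming that the per-position factor and summand in the first two coordinates depend on the starting symbol through its third coordinate alone.
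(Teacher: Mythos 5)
Your proposal is correct and follows essentially the same route as the paper: transitivity from the morphism property, symmetry via the consecutive-residues observation on the third coordinate combined with the homogeneity (shift) property of $\psi_{p,k,u,v}$ and the group-order argument to cancel the residual increment $(x'',y'')$. The only cosmetic difference is that you establish reflexivity directly with $m=0$, whereas the paper deduces it from symmetry and transitivity together with the fact that every element is related to something; both are fine.
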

In other words,  Lemma \ref{lem:equivalence} says that the automaton associated with $\psi_{p,k,u,v}$ is a disjoint union of its strongly connected components. To prove the primitivity of $\psi_{p,k,u,v}$, we need to show that all the elements of $\Lambda_{p,k,u,v}$ are related through $R$.
The following lemma displays a useful identity concerning the terms $\ell_{p,k}(n!)$.

\begin{lem} \label{lem:identity}
For any integers $n, m \geq 0$ not divisible by $p$, and integers $s \geq 0, t \geq k + \lfloor \log_p m \rfloor$ we have 
\begin{equation}\label{eq:identity}
\ell_{p,k}((p^{s+t}n - p^s m)!) \: \ell_{p,k}((p^s m)!) \:n \equiv  (-1)^{pm-1} \ell_{p,k}((p^{s+t}n)!)\:m \pmod{p^k}.
\end{equation}
\begin{proof}
We can write
\begin{equation} \label{eq:identity_congruence}
\ell_{p,k}((p^{s+t}n)!) \equiv  \ell_{p,k}((p^{s+t}n - p^s m)!) n \prod_{j=1}^{p^sm-1} \ell_{p,k}(np^{s+t}-j) \pmod{p^k}.
\end{equation}
The condition $t \geq k + \lfloor \log_p m \rfloor$ guarantees that for each $j = 1, \ldots, p^sm-1$ we have
$$ \ell_{p,k}(np^{s+t}-j) \equiv -\ell_{p,k}(j) \pmod{p^k}.$$
Hence, multiplying both sides of (\ref{eq:identity_congruence}) by $m$, we obtain the desired result.
\end{proof}
\end{lem}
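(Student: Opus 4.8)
The plan is to reduce the identity to the completely multiplicative behaviour of $\ell_{p,k}$ modulo $p^k$ together with a single ``reflection'' congruence across the large power $p^{s+t}$. First I would record that $\ell_{p,k}$ is completely multiplicative modulo $p^k$: for positive integers $a,b$ one has $\nu_p(ab)=\nu_p(a)+\nu_p(b)$, so the $p$-free parts multiply and $\ell_{p,k}(ab)\equiv \ell_{p,k}(a)\ell_{p,k}(b)\pmod{p^k}$. Setting $N=p^{s+t}n$ and using the factorisation $N!=(N-p^sm)!\prod_{j=0}^{p^sm-1}(N-j)$, applying $\ell_{p,k}$ turns the problem into an evaluation of $\ell_{p,k}$ on each factor $N-j$. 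The factor $j=0$ is immediate: since $p\nmid n$, we get $\ell_{p,k}(N)=\ell_{p,k}(p^{s+t}n)\equiv n\pmod{p^k}$, which accounts for the stray factor $n$ on the left-hand side.

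The heart of the argument, and the step I expect to be the main obstacle, is the reflection congruence
$$\ell_{p,k}(N-j)\equiv -\ell_{p,k}(j)\pmod{p^k}\qquad(1\le j\le p^sm-1),$$
since this is exactly where the hypothesis $t\ge k+\lfloor\log_p m\rfloor$ is spent. Writing $j=p^a j'$ with $p\nmid j'$, the bound $j<p^sm<p^{\,s+\lfloor\log_p m\rfloor+1}$ forces $a=\nu_p(j)\le s+\lfloor\log_p m\rfloor$, and hence $s+t-a\ge k\ge 1$. Then $N-j=p^a(p^{s+t-a}n-j')$ with $p\nmid(p^{s+t-a}n-j')$, so $\nu_p(N-j)=a=\nu_p(j)$ and the $p$-free part of $N-j$ is $p^{s+t-a}n-j'$. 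Because $s+t-a\ge k$ we have $p^{s+t-a}n\equiv 0\pmod{p^k}$, so this $p$-free part is $\equiv -j'\equiv -\ell_{p,k}(j)\pmod{p^k}$, which is precisely the claimed congruence. The role of the logarithmic term in the hypothesis is only to control how deep the $p$-adic valuation of a term $j<p^sm$ can reach, and this is the one genuinely arithmetic input.

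It then remains to assemble the pieces, which I expect to be routine. The $p^sm-1$ reflected factors contribute an overall sign $(-1)^{p^sm-1}$ together with $\prod_{j=1}^{p^sm-1}\ell_{p,k}(j)\equiv\ell_{p,k}((p^sm-1)!)\pmod{p^k}$, by multiplicativity again. To convert the stray factorial $(p^sm-1)!$ into $(p^sm)!$ I would use $\ell_{p,k}((p^sm)!)=\ell_{p,k}(p^sm)\,\ell_{p,k}((p^sm-1)!)=m\,\ell_{p,k}((p^sm-1)!)$, valid because $p\nmid m$ gives $\ell_{p,k}(p^sm)\equiv m\pmod{p^k}$. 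Substituting this, and then multiplying the whole congruence through by $m$ to clear the resulting inverse, rearranges everything into the asserted identity; the collected sign $(-1)^{p^sm-1}$ agrees with the stated $(-1)^{pm-1}$ since $p^sm\equiv pm\pmod 2$ for odd $p$. Apart from the valuation bookkeeping in the reflection step, the proof is nothing more than repeated use of multiplicativity.
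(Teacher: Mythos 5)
Your proof is correct and follows essentially the same route as the paper's: factor $(p^{s+t}n)!$ as $(p^{s+t}n-p^sm)!\prod_{j=0}^{p^sm-1}(p^{s+t}n-j)$, apply the reflection congruence $\ell_{p,k}(p^{s+t}n-j)\equiv-\ell_{p,k}(j)\pmod{p^k}$ to each factor with $1\le j\le p^sm-1$, and multiply through by $m$; you merely supply the valuation bookkeeping that the paper leaves implicit. One genuine point your careful sign-tracking surfaces: the product of $p^sm-1$ reflected factors yields $(-1)^{p^sm-1}$, which equals the stated $(-1)^{pm-1}$ for odd $p$ and also for $p=2$ with $s\ge1$, but not for $p=2$, $s=0$ (e.g.\ $p=2$, $k=2$, $s=0$, $m=n=1$, $t=2$ gives $\ell_{2,2}(3!)\,\ell_{2,2}(1!)\cdot 1=3$ while $(-1)^{pm-1}\ell_{2,2}(4!)\cdot 1\equiv 1\pmod 4$), so the exponent in the statement should really be $p^sm-1$; in the paper's only application ($s=\nu_p(v)$ with $2\mid v$) one has $s\ge1$ whenever $p=2$, so nothing downstream is affected.
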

In the following lemma we prove that $\alpha_{p,k,u,v}(n)$  takes on all the possible values.
\begin{lem} \label{lem:group}
Each symbol $(x,y,z) \in \Lambda_{p,k,u,v}$ appears in $\psi_{p,k,u,v}^{\omega}(1,0,0)$.
\begin{proof}
Consider the set 
$$H = \{(x,y) \in (\mathbb{Z}/p^{k} \mathbb{Z})^{\times} \times (\mathbb{Z}/u \mathbb{Z})^{+}: (x,y,0) \text{ appears in } \psi_{p,k,u,v}^{\omega}(1,0,0) \}.$$
By Proposition \ref{prop:morphism} we have $$H = \{ (\ell_{p,k} ((vn)!), \nu_p((vn)!) \bmod u): n \geq 0\}.$$
First, we will show that $H$ is a subgroup of $(\mathbb{Z}/p^{k} \mathbb{Z})^{\times} \times (\mathbb{Z}/u \mathbb{Z})^{+}$. Obviously, $(1,0) \in H$. Now if $(x,y), (x',y') \in H$ then $(1,0,0)R(x,y,0)$ and $(1,0,0)R(x',y',0)$. By a reasoning similar as in Lemma \ref{lem:equivalence} we obtain $(x,y,0) R (x^{-1}, -y,0)$, and therefore $(1,0,0)R(x^{-1}, -y,0)$. This means that $(x',y',0) R (x'x^{-1}, y'-y,0)$, thus $(x'x^{-1}, y'-y) \in H$. 

The next step is to show that in fact $H = (\mathbb{Z}/p^{k} \mathbb{Z})^{\times} \times (\mathbb{Z}/u \mathbb{Z})^{+}.$
To do this we will prove that a set of generators of $(\mathbb{Z}/p^{k} \mathbb{Z})^{\times} \times (\mathbb{Z}/u \mathbb{Z})^{+}$ appears in $H$. First, denote by $H_1$ the projection of $H$ on the first coordinate, which is a subgroup of $(\mathbb{Z}/p^{k} \mathbb{Z})^{\times}$.
Consider the congruence (\ref{eq:identity}) with $s = \nu_p(v), m = v/p^s$, $t \geq k+ \lfloor \log_p m \rfloor,$ and  let $n$ run  over the set $\{xm: x \in (\mathbb{Z}/p^{k} \mathbb{Z})^{\times}\}$. Then $pm$ is even and  (\ref{eq:identity}) takes the form
\begin{equation} \label{eq:congruence_x}
-\ell_{p,k}((p^txv - v)!) \: \ell_{p,k}(v!) x \equiv \ell_{p,k}((p^txv)!) \pmod{p^k}.
\end{equation}
 We have $\ell_{p,k}((p^txv - v)!), \ell_{p,k}(v!), \ell_{p,k}((p^txv)!) \in H_1$, and therefore $p^k-x \in H_1$ as well. It follows that $H_1 = (\mathbb{Z}/p^{k} \mathbb{Z})^{\times}$.
 
 Now fix $x=1$ in (\ref{eq:congruence_x}). By Lemmas \ref{lem:product} and \ref{lem:recurrence} we get
$$\ell_{p,k}((p^tv)!) \equiv \ell_{p,k}(v!) \prod_{i=1}^{t} (p^iv)_{p}! \equiv  \ell_{p,k}(v!) \pmod{p^k}.$$
Here we used the assumption that $v$ is divisible by $\mathrm{lcm}(p^{k-1},2)$.
The congruence (\ref{eq:congruence_x}) further simplifies to the form
$$\ell_{p,k}((v(p^t-1))!) \equiv -1 \pmod{p^k}$$ 
for any integer $t \geq k+ \lfloor \log_p m \rfloor$.
We have
\begin{align*}
\nu_p((v(p^{t+1}-1))!) - \nu_p((v(p^{t}-1))!) &= \\ \frac{p^tv(p-1) - s_p(m(p^{t+1}-1)) + s_p(m(p^{t}-1)) }{p-1} &=
p^tv - 1,
\end{align*}
since for $t \geq 1 + \lfloor \log_p m \rfloor$ the base-$p$ expansion of $mp^{t+1}-m$ has exactly one more digit (equal to $p-1$) than  $mp^{t}-m$, with other digits unchanged.  
If we denote $y = \nu_p((v(p^{t}-1))!)$, then it follows that $(p^k-1, y), (p^k-1,y-1) \in H$,
and hence $(1,1) \in H$. Combined with the earlier reasoning, this means that $H = (\mathbb{Z}/p^{k} \mathbb{Z})^{\times} \times (\mathbb{Z}/u \mathbb{Z})^{+}$, as $H$ contains a set of generators of the latter group.

Now take any $(x,y,z) \in \Lambda_{p,k,u,v}$. Directly from the definition of the morphism $\psi_{p,k,u,v}$, there exist some $(x',y') \in (\mathbb{Z}/p^{k} \mathbb{Z})^{\times} \times (\mathbb{Z}/u \mathbb{Z})^{+}$ such that $(x',y',0)R(x,y,z)$ and the result follows.
\end{proof}
\end{lem}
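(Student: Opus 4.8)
The plan is to prove that $n \mapsto \alpha_{p,k,u,v}(n)$ is surjective onto $\Lambda_{p,k,u,v}$. Since $n \bmod v$ already runs through every residue in $\{0,\ldots,v-1\}$, the heart of the matter is to control the first two coordinates, and I would do this group-theoretically. Fixing the third coordinate to be $0$, set
$$H = \{(x,y) \in (\mathbb{Z}/p^{k}\mathbb{Z})^{\times} \times (\mathbb{Z}/u\mathbb{Z})^{+} : (x,y,0) \text{ appears in } \psi_{p,k,u,v}^{\omega}(1,0,0)\},$$
which by Proposition \ref{prop:morphism} is exactly $\{(\ell_{p,k}((vn)!), \nu_p((vn)!) \bmod u) : n \geq 0\}$. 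The two main goals are (i) to show $H$ is a subgroup of the ambient product group, and (ii) to show $H$ is the whole group; the arbitrary third coordinate is then recovered at the end.

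First I would establish that $H$ is a subgroup, using only the ``shift'' observation recorded after Proposition \ref{prop:morphism} together with the symmetry of the relation $R$ from Lemma \ref{lem:equivalence}. For closure under the product, if $(x_1,y_1,0)$ and $(x_2,y_2,0)$ both occur, then shifting an occurrence of $(x_2,y_2,0)$ by $(x_1,y_1)$ produces an occurrence of $(x_1x_2, y_1+y_2, 0)$, since the relevant iterate of $\psi_{p,k,u,v}$ sits as a factor of a prefix of the fixed point. For inverses, the symmetry of $R$ gives $(x,y,0)R(1,0,0)$, and shifting an occurrence of $(1,0,0)$ in $\psi_{p,k,u,v}^{m}(x,y,0)$ by $(x^{-1},-y)$ yields an occurrence of $(x^{-1},-y,0)$ in $\psi_{p,k,u,v}^{m}(1,0,0)$. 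Hence $H$ is closed under products and inverses and contains $(1,0)$.

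The second goal is where the genuine arithmetic lives, and I expect it to be the \emph{main obstacle}. To fill out the first coordinate, let $H_1$ be the projection of $H$ onto $(\mathbb{Z}/p^{k}\mathbb{Z})^{\times}$, a subgroup. I would feed into the identity of Lemma \ref{lem:identity} the parameters $s = \nu_p(v)$, $m = v/p^{s}$ (coprime to $p$), $n = xm$ with $x$ a unit, and $t$ large; since $pm$ is even the identity collapses to a relation expressing $\pm x$ as a quotient of three factors $\ell_{p,k}((\cdot)!)$ whose arguments are all multiples of $v$, hence all lie in $H_1$. This forces $p^{k}-x \in H_1$ for every unit $x$, so $H_1 = (\mathbb{Z}/p^{k}\mathbb{Z})^{\times}$. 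To fill out the additive coordinate I would specialize the same identity at $x=1$ and invoke Lemmas \ref{lem:product} and \ref{lem:recurrence} (this is exactly where the divisibility of $v$ by $\mathrm{lcm}(p^{k-1},u,2)$ is used) to reduce it to $\ell_{p,k}((v(p^{t}-1))!) \equiv -1 \pmod{p^{k}}$ for all large $t$. The delicate point is then to compute $\nu_p((v(p^{t+1}-1))!) - \nu_p((v(p^{t}-1))!)$; by Legendre's formula and a sum-of-digits count it equals $p^{t}v - 1 \equiv -1 \pmod{u}$, so $H$ contains two elements with equal first coordinate $p^{k}-1$ whose second coordinates differ by $1$. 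The subgroup property then yields $(1,1) \in H$, and since $(1,1)$ generates the additive factor while $H_1$ is all of the units, $H$ contains a generating set and equals the whole product group.

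Finally, for an arbitrary target $(x,y,z)$, I would iterate $\psi_{p,k,u,v}$ enough times that the third coordinates appearing in $\psi_{p,k,u,v}^{m}(x',y',0)$ run through all residues modulo $v$ (i.e. $p^{m} \geq v$); at the position carrying third coordinate $z$ the first two coordinates have the form $(x''x', y''+y')$ for fixed $(x'',y'')$, by the shift observation. Since $H$ is now the full group I may choose $(x',y') = (x(x'')^{-1}, y-y'')$, which lies in $H$, and this places $(x,y,z)$ in the fixed point. The crux of the whole argument remains step (ii): arranging the parameters in Lemma \ref{lem:identity} so the identity degenerates into factors manifestly supported on multiples of $v$, and tracking the $p$-adic valuation precisely enough — through the extra base-$p$ digit gained when passing from $m(p^{t}-1)$ to $m(p^{t+1}-1)$ — to pin down the additive coordinate.
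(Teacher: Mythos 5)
Your proposal is correct and follows essentially the same route as the paper's proof: the same subgroup $H$, the same use of the shift observation and the symmetry of $R$ for the subgroup property, the same specialization of Lemma \ref{lem:identity} with $s=\nu_p(v)$, $m=v/p^s$ to fill out the multiplicative coordinate, and the same valuation computation $\nu_p((v(p^{t+1}-1))!)-\nu_p((v(p^{t}-1))!)=p^tv-1$ to obtain $(1,1)\in H$. No substantive differences to report.
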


We are now ready to prove primitivity of $\psi_{p,k,u,v}$.

\begin{prop}\label{prop:primitive}
The morphism $\psi_{p,k,u,v}$ is primitive.
\end{prop}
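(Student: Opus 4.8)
The plan is to show that the directed graph $G$ underlying the incidence matrix $M$ of $\psi_{p,k,u,v}$---with one vertex per symbol of $\Lambda_{p,k,u,v}$ and an edge $\sigma \to \sigma'$ whenever $\sigma'$ occurs in $\psi_{p,k,u,v}(\sigma)$---is \emph{strongly connected} and \emph{aperiodic}, since by Perron--Frobenius this is exactly the condition for $M$ to be primitive, equivalently for the uniform exponent $i$ in the definition of primitivity to exist. Recall that $\sigma'$ occurs in $\psi_{p,k,u,v}^m(\sigma)$ precisely when $G$ contains a walk of length $m$ from $\sigma$ to $\sigma'$, so that $(M^m)_{\sigma',\sigma} > 0$; thus the relation $R$ of Lemma \ref{lem:equivalence} records reachability in $G$.

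First I would establish strong connectivity. By Lemma \ref{lem:group} every symbol $(x,y,z)$ occurs in $\psi_{p,k,u,v}^{\omega}(1,0,0)$, so $(1,0,0)\,R\,(x,y,z)$ for all symbols; that is, $G$ contains a walk from $(1,0,0)$ to every vertex. Since $R$ is symmetric by Lemma \ref{lem:equivalence}, $G$ also contains a walk from every vertex back to $(1,0,0)$. Concatenating, any $\sigma$ reaches any $\sigma'$ through $\sigma \to (1,0,0) \to \sigma'$, so $G$ is strongly connected and the whole alphabet forms a single $R$-class.

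Next I would exhibit a self-loop and conclude. In the notation of Proposition \ref{prop:morphism}, the letter at position $i=0$ of $\psi_{p,k,u,v}(1,0,0)$ is $(x_0,y_0,z_0)$ with $x_0 = 1\cdot(0)_p! = 1$, $y_0 = 0$ and $z_0 = 0$; hence $(1,0,0)$ occurs in its own image and $G$ carries an edge $(1,0,0)\to(1,0,0)$. A strongly connected graph with a self-loop is aperiodic, since the gcd of its cycle lengths divides the length-$1$ cycle and is therefore $1$. Concretely, letting $a_\sigma$ be the length of a walk $\sigma \to (1,0,0)$ and $b_{\sigma'}$ the length of a walk $(1,0,0)\to\sigma'$, one obtains walks $\sigma \to \sigma'$ of every length $\ge a_\sigma + b_{\sigma'}$ by looping around the self-loop, so that $N = \max_\sigma a_\sigma + \max_{\sigma'} b_{\sigma'}$ makes every entry of $M^N$ positive and $\psi_{p,k,u,v}$ is primitive.

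I expect the genuine difficulty to lie entirely in Lemma \ref{lem:group}, which already supplies full reachability from $(1,0,0)$; granting that, the only remaining subtlety is to upgrade ``for each pair of symbols some power of $\psi_{p,k,u,v}$ works'' to ``a single power works simultaneously,'' i.e.\ to rule out periodicity. This is the step one must not gloss over, since strong connectivity alone yields only irreducibility of $M$, not primitivity; the self-loop at $(1,0,0)$ is precisely what closes this gap.
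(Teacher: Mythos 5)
Your proof is correct and follows essentially the same route as the paper: Lemma \ref{lem:group} together with the symmetry of $R$ from Lemma \ref{lem:equivalence} gives irreducibility of the incidence matrix, and the occurrence of $(1,0,0)$ at position $0$ of its own image (the nonzero diagonal entry) upgrades irreducibility to primitivity. You merely make explicit the ``well-known fact'' that the paper cites, including the verification $x_0 = 1\cdot(0)_p! = 1$ and the aperiodicity bookkeeping.
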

\begin{proof}
By Lemma \ref{lem:group} we have $(1,0,0) R (x,y,z)$ for each $(x,y,z) \in \Lambda_{p,k,u,v}$, and thus all the elements of $\Lambda_{p,k,u,v}$ are related. In consequence, the incidence matrix $M = [m_{i,j}]_{1 \leq i,j \leq d}$ associated with $\psi_{p,k,u,v}$ is \emph{irreducible}, i.e., for each $i,j$ there exists a positive integer $n$ such that  $m^{(n)}_{i,j} > 0$, where $M^n = [m^{(n)}_{i,j}]_{1 \leq i,j \leq d}$. However, $M$ has a nonzero diagonal element corresponding to $(1,0,0) \in \Lambda_{p,k,u,v}$, so by a well-known fact it is primitive.
\end{proof}
 
Proposition \ref{prop:primitive} implies that the frequencies of all symbols $(x,y,z) \in \Lambda_{p,k,u,v}$ in the sequence $\{\alpha_{p,k,u,v}(n)\}_{n \geq 0}$ are positive. In the following proposition we show that they are in fact all equal.
 
\begin{prop}\label{prop:equal_dens}
Each element $(x,y,z) \in \Lambda_{p,k,u,v}$ appears in $\{\alpha_{p,k,u,v}(n)\}_{n \geq 0}$ with frequency
$$\frac{1}{p^{k-1}(p-1)uv}.$$
Equivalently, for any integers $c \geq 1$ and $0 \leq d \leq c-1$ each element $(x,y) \in (\mathbb{Z}/p^{k} \mathbb{Z})^{\times} \times (\mathbb{Z}/u \mathbb{Z})^{+}$  appears in  $\{\ell_{p,k}((cn+d)!), \nu_p((cn+d)!) \bmod u\}_{n \geq 0}$  with frequency
$$\frac{1}{p^{k-1}(p-1)u}.$$
\end{prop}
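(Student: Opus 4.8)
The plan is to read the first (frequency) assertion directly off the Perron--Frobenius machinery recalled in Section \ref{sec:Automatic}, and then to deduce the arithmetic-progression assertion from it. Since $\psi_{p,k,u,v}$ is $p$-uniform and, by Proposition \ref{prop:primitive}, primitive, \cite[Theorem 8.4.7]{AS03} guarantees that the frequencies of all letters exist and that the frequency vector is the normalized positive Perron--Frobenius eigenvector of the incidence matrix $M$. I would therefore reduce the entire claim to showing that every row of $M$ sums to $p$: for then $\tfrac{1}{p}M$ is row-stochastic, its Perron--Frobenius eigenvalue equals $p$, and by the final remark of Section \ref{sec:Automatic} all $d = |\Lambda_{p,k,u,v}| = p^{k-1}(p-1)uv$ frequencies are equal to $1/d = 1/(p^{k-1}(p-1)uv)$, which is exactly the asserted value.

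The core of the argument is the row-sum computation. Fixing a target letter $(x',y',z')$, the corresponding row sum counts the pairs $((x,y,z),i) \in \Lambda_{p,k,u,v}\times\{0,\ldots,p-1\}$ for which the $i$-th letter $(x_i,y_i,z_i)$ of $\psi_{p,k,u,v}(x,y,z)$ equals $(x',y',z')$, that is, the triples satisfying $pz+i\equiv z'\pmod v$, $y+z\equiv y'\pmod u$ and $x\,(pz+i)_p!\equiv x'\pmod{p^k}$. The key observation is that $(z,i)\mapsto pz+i$ is a bijection from $\{0,\ldots,v-1\}\times\{0,\ldots,p-1\}$ onto $\{0,\ldots,pv-1\}$, while reduction modulo $v$ is exactly $p$-to-one on that range; hence precisely $p$ pairs $(z,i)$ satisfy the third-coordinate congruence. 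For each such pair the second congruence pins down $y$ uniquely in $\mathbb{Z}/u\mathbb{Z}$, and the first congruence pins down $x$ uniquely in $(\mathbb{Z}/p^k\mathbb{Z})^\times$, because $(pz+i)_p!$ is a product of integers coprime to $p$ and is thus a unit modulo $p^k$. The total count is therefore $p\cdot 1\cdot 1 = p$, independent of the target, so every row of $M$ sums to $p$.

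Finally I would obtain the ``equivalent'' statement from the triple-frequency statement by a density-splitting argument. Given $c\geq 1$ and $0\leq d\leq c-1$, I would apply the first part with $v$ chosen to be any common multiple of $c$ and $\mathrm{lcm}(p^{k-1},u,2)$ (and also of $4$ in the case $p=2,k=2$). The class $\{m\equiv d \pmod c\}$ then splits into the $v/c$ classes $\{m\equiv z\pmod v\}$ with $z\equiv d\pmod c$, and since the third coordinate of $\alpha_{p,k,u,v}$ records $m \bmod v$, the first part assigns each such class density $1/(p^{k-1}(p-1)uv)$ within the set where $(\ell_{p,k}(m!),\nu_p(m!)\bmod u)=(x,y)$. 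Summing over the $v/c$ admissible residues gives density $1/(p^{k-1}(p-1)uc)$ for $\{m\equiv d\pmod c\}$, and rescaling by the factor $c$ that converts a density in $\mathbb{N}$ into a frequency along the subsequence $m=cn+d$ yields the claimed value $1/(p^{k-1}(p-1)u)$, independently of $c$ and $d$.

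I expect the main obstacle to be the row-sum computation itself, and in particular making the bijection-and-counting step watertight when $\gcd(p,v)>1$, so that the third-coordinate congruence genuinely admits exactly $p$ solution pairs $(z,i)$ and the remaining two congruences each contribute a unique solution. Everything else is a formal appeal to the Perron--Frobenius criterion of Section \ref{sec:Automatic} together with a routine density-to-frequency rescaling.
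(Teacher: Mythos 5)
Your proposal is correct and follows essentially the same route as the paper: both reduce the claim, via the Perron--Frobenius criterion and Proposition \ref{prop:primitive}, to showing that $\frac{1}{p}M$ is row-stochastic, and both count for a fixed target $(x',y',z')$ the exactly $p$ pairs $(z,i)$ with $pz+i\equiv z'\pmod{v}$ --- the only cosmetic difference being that you solve the remaining two congruences directly for the unique $x$ and $y$ (using that $(pz+i)_p!$ is a unit modulo $p^k$), where the paper invokes the translation symmetry of $\psi_{p,k,u,v}$. Your explicit derivation of the arithmetic-progression form (choosing $v$ to be a common multiple of $c$ and $\mathrm{lcm}(p^{k-1},u,2)$, splitting the class $d \bmod c$ into $v/c$ classes modulo $v$, and rescaling by $c$) correctly fills in a step the paper leaves implicit.
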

\begin{proof}
By the discussion in Section \ref{sec:Automatic} and Proposition \ref{prop:primitive} it suffices to prove that the matrix $\frac{1}{p} M$ is row-stochastic. In other words, we need to show that each fixed $(x',y',z') \in \Lambda_{p,k,u,v}$ appears exactly $p$ times in the words $\psi_{p,k,u,v}(x,y,z)$ with $(x,y,z) \in \Lambda_{p,k,u,v}$.

If $(x',y',z')$ appears in $\psi_{p,k,u,v}(x,y,z)$, then $(x' x'',y' + y'',z')$ appears the same number of times in $\psi_{p,k,u,v}(xx'',y + y'',z)$ for any $(x'',y'') \in (\mathbb{Z}/p^{k} \mathbb{Z})^{\times} \times (\mathbb{Z}/u \mathbb{Z})^{+}$. Therefore, for each fixed $z'$ the number of occurrences of $(x',y',z')$ in the words $\psi_{p,k,u,v}(x,y,z)$ with $(x,y,z) \in \Lambda_{p,k,u,v}$ does not depend on $x'$ and $y'$. 

Now we will show that it does not depend on $z'$ either.
The symbol $(x',y',z')$ appears on the $i$-th position in $\psi_{p,k,u,v}(x,y,z)$ for some $(x,y,z) \in \Lambda_{p,k,u,v}$ if and only if the congruence 
\begin{equation}\label{eq:stochastic_cong}
pz + i \equiv z' \pmod{v}.
\end{equation}
is satisfied.
Regardless of $z'$, the congruence (\ref{eq:stochastic_cong}) has exactly $p$ solutions $(z,i) \in \{0,1,\ldots,v-1\} \times\{0,1,\ldots,p-1\}$,
which completes the proof. 
\end{proof}
According to Proposition \ref{prop:equal_dens}, we could informally say that for a random integer $n \geq 0$ the three coordinates of $\alpha_{p,k,u,v}(n)$ behave like independent uniformly distributed random variables.

We will now proceed to prove the results of Section \ref{sec:Main}. As in the beginning of this section, write 
$$b = p_1^{l_1} \cdots p_r^{l_r}$$
and assume that the conditions (\ref{eq:primes_inequality_1}) and (\ref{eq:primes_inequality_2}) are satisfied.
Until the end of this section let $q_1 = p_1^{l_1}, q = b/q_1$ and denote by $t$ and $s$ the multiplicative inverse and the order of $q$ modulo $q_1^k,$ respectively. We assign specific values to the parameters $p,k',u,v$ of the sequence $\{\alpha_{p,k',u,v}(n)\}_{n \geq 0}$. Put $u = l_1s$ and
$$v = \begin{cases}
\mathrm{lcm}(p_1^{kl_1-1},2,u) &\text{ if } b^k \neq 4, \\
 4 &\text{ if either } b=2, k=2 \text{ or }b=4, k=1.
\end{cases}.$$
To appropriately describe $\{\ell_{b,k}(n!)\}_{n \geq 0}$, define
\begin{align*}
\Sigma_{b,k} &= \Lambda_{p_1,l_1k,u,v}, \\
\varphi_{b,k} &= \psi_{p_1,l_1k,u,v}.
\end{align*}
The terms $\ell_{b,k}(n!)$ take values in the alphabet 
$$\Delta_{b,k} = \{ 1 \leq a \leq b^k: b \nmid a\}.$$
We also define a coding $\uptau_{b,k} : \Sigma_{b,k}^{*} \rightarrow \Delta_{b,k} ^{*}$ in the following way:
$$\uptau_{b,k} (x,y,z) = x p_1^{(y \bmod l_1)}q^k t^{k+(\lfloor y/l_1 \rfloor \bmod s)}  \bmod b^k$$
and denote $$\{\beta_{b,k}(n)\}_{n \geq 0} = \uptau_{b,k}(\varphi_{b,k}(1,0,0)).$$
The terms $\beta_{b,k}(n)$ take values in the alphabet 
$$\widetilde{\Delta}_{b,k} = \{ a \in \Delta_{b,k}: q^k | a \}.$$
The formula (\ref{eq:two_bases_2}) shows that $\beta_{b,k}(n) = \ell_{b,k}(n!)$ for $n \in S_{b,k}$. 
We are now ready to prove Theorem \ref{prop:automatic}.

\begin{proof} [Proof of Theorem \ref{prop:automatic}]
The sequence $\{\beta_{b,k}(n)\}_{n \geq 0}$ is $p_1$-automatic, which immediately gives the desired result when $b$ is a prime power.  If $b$ has multiple prime factors and $l_1(p_1-1) > l_2(p_2-1)$, then Lemma \ref{lem:dens_1} guarantees that $\beta_{b,k}(n) = \ell_{b,k}(n!)$ for all but a finite number of nonnegative integers. However, an $l$-automatic sequence remains $l$-automatic after changing a finite number of terms. Non-automaticity of $\{ \ell_{b,k} (n!) \}_{n \geq 0}$ in the case $l_1(p_1-1) = l_2(p_2-1)$ follows immediately from the result of Lipka \cite{Li18}, since $\ell_{b} (n!)$ is a coding of $\ell_{b,k} (n!)$.
\end{proof}

In the case when $\{ \ell_{b,k} (n!) \}_{n \geq 0}$ is automatic, we use the description of $\{\beta_{b,k}(n)\}_{n \geq 0}$  in order to express the former sequence as the image under a coding of a fixed point of a uniform morphism. Indeed, we can add finitely many extra letters to $\Sigma_{b,k}$  to handle the terms $\ell_{b,k}(n!) \neq \beta_{b,k}(n)$ and modify $\varphi_{b,k}$ and $\uptau_{b,k}$ accordingly. 
In the following proof of Theorem \ref{thm:density} we rely on directly studying $\beta_{b,k}(n)$ instead of $\ell_{b,k}(n!)$.

\begin{proof} [Proof of Theorem \ref{thm:density}]
The frequency of each $a \in \Delta_{b,k}$ is the same in $\{\ell_{b,k}(n!)\}_{n \geq 0}$ and $\{\beta_{b,k}(n)\}_{n \geq 0}$, as these sequences agree on a set of $n$ of density 1, thus we can focus on the latter sequence.
Obviously, if $a \not\in \widetilde{\Delta}_{b,k}$, then this frequency equals $0$.
 
The value of $\uptau_{b,k}(x,y,z)$ does not depend on $z$, so we can define an alphabet $\Sigma'_{b,k}$ as the first two components of $\Sigma_{b,k}$ and a coding  $\uptau'_{b,k}(x,y): (\Sigma'_{b,k})^* \rightarrow \Delta_{b,k}^{*}$ by  $\uptau'_{b,k}(x,y) = \uptau_{b,k}(x,y,z)$.
In the case when $q^k | a$, the equality $\uptau'_{b,k}(x,y) = a$ is equivalent to the system of congruences
\begin{align*}
y &\equiv \nu_{p_1}(a) \pmod{l_1}, \\
x  t^{\lfloor y/l_1 \rfloor}  p_1^{\nu_{p_1}(a)}&\equiv a  \pmod{q_1^{k}},
\end{align*}
which has exactly $s p_1^{\nu_{p_1}(a)}$ solutions $(x,y) \in  \Sigma'_{b,k}$. The alphabet $\Sigma'_{b,k}$ has exactly $p_1^{l_1k-1}(p_1-1)l_1s$ elements, therefore by Proposition \ref{prop:equal_dens} the symbol $a$ appears in $\{\beta_{b,k}(n)\}_{n \geq 0}$ and its subsequences $\{\beta_{b,k}(cn+d)\}_{n \geq 0}$ with asymptotic frequency
$$f_{b,k}(a) = \frac{1}{(p_1-1)l_1} p_1^{\nu_{p_1}(a) -kl_1+1},$$ 
which ends the proof.
\end{proof}

It is in fact possible to generate the sequence $\{\beta_{b,k}(n)\}_{n \geq 0}$  using  an alphabet smaller (in most cases) than $\Sigma_{b,k}$. Let
$$\widetilde{\Sigma}_{b,k} = \widetilde{\Delta}_{b,k}  \times \{0,\ldots,v-1\}$$
and define a $p_1$-uniform morphism $\widetilde{\varphi}_{b,k} : \widetilde{\Sigma}_{b,k}^* \rightarrow \widetilde{\Sigma}_{b,k}^*$ by 
$$\widetilde{\varphi}_{b,k}(x,z) = (x_0,z_0)(x_1,z_1)\cdots(x_{p-1},z_{p-1}),$$ 
where for $j=0,1,\ldots,p_1-1$, we put
 \begin{align*}
 x_j &= x p_1^{z \bmod l_1} t^{(\lfloor z/l_1 \rfloor +\varepsilon(x,z)) \bmod{s}} (p_1z+j)_{p_1}! \bmod b^{k}, \\ 
 z_j &= p_1z+ j \bmod v,
 \end{align*} 
 and
 $$
\varepsilon(x,z) = \begin{cases} 
0 & \text{ if } \nu_{p_1}(x) \bmod{l_1} + z \bmod{l_1} < l_1, \\
1 & \text{ otherwise.} \end{cases}
$$
In particular, if $l_1 = 1$, then $x_j$ takes a simpler form
$$x_j = x  t^{z \bmod{s}} (-1)^z j! \bmod{b^k}.$$
Let $\widetilde{\uptau}_{b,k}: \widetilde{\Sigma}_{b,k}^* \rightarrow \widetilde{\Delta}_{b,k}^*$ be a coding defined by $\widetilde{\uptau}_{b,k}(x,y) = x$. The following proposition shows that the sequence $\{\beta_{b,k}(n)\}_{n \geq 0}$ can also be described in terms of $\widetilde{\varphi}_{b,k}$ and $\widetilde{\uptau}_{b,k}$.

\begin{prop}\label{prop:smaller_alphabet}
 We have $\{\beta_{b,k}(n)\}_{n \geq 0} = \widetilde{\uptau}_{b,k}(\widetilde{\varphi}_{b,k}^{\omega}(q^kt^k,0)).$ 
\end{prop}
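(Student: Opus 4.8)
The plan is to realize the sequence $\{(\beta_{b,k}(n),\,n\bmod v)\}_{n\ge 0}$ as the fixed point of $\widetilde{\varphi}_{b,k}$ prolonged from $(q^kt^k,0)$, and then to collapse to the first coordinate via $\widetilde{\uptau}_{b,k}$. Set $\widetilde{\alpha}(n)=(\beta_{b,k}(n),\,n\bmod v)$; since $\beta_{b,k}(n)\in\widetilde{\Delta}_{b,k}$, this is a letter of $\widetilde{\Sigma}_{b,k}$. By the fixed-point criterion recalled in Section~\ref{sec:Automatic}, it suffices to check that $\widetilde{\alpha}(0)=(q^kt^k,0)$ and that, for every $n\ge 0$,
\[
\widetilde{\varphi}_{b,k}(\widetilde{\alpha}(n))=\widetilde{\alpha}(p_1 n)\,\widetilde{\alpha}(p_1 n+1)\cdots\widetilde{\alpha}(p_1 n+p_1-1).
\]
Granting this, uniqueness of the prolongation forces $\widetilde{\alpha}=\widetilde{\varphi}_{b,k}^{\omega}(q^kt^k,0)$, and applying $\widetilde{\uptau}_{b,k}(x,z)=x$ recovers $\{\beta_{b,k}(n)\}_{n\ge 0}$, which is the assertion.

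First I would dispose of the easy parts. Since $\alpha_{p_1,l_1k,u,v}(0)=(1,0,0)$ we get $\beta_{b,k}(0)=\uptau_{b,k}(1,0,0)=q^kt^k\bmod b^k$ and $0\bmod v=0$, so $\widetilde{\alpha}(0)=(q^kt^k,0)$; prolongability holds because $(q^kt^k,0)$ reappears in position $0$ of its own image (there $(0)_{p_1}!=1$ and the correction $\varepsilon$ vanishes) and $p_1\ge 2$. For the recurrence, the second coordinate is immediate: from $z=n\bmod v$ the rule $z_j=p_1z+j\bmod v$ produces $(p_1n+j)\bmod v$, which is the second coordinate of $\widetilde{\alpha}(p_1n+j)$.

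The substance is the first coordinate, $x_j=\beta_{b,k}(p_1n+j)$, which I would verify by the Chinese Remainder Theorem for $b^k=q_1^k q^k$. Modulo $q^k$ both sides vanish, since $q^k\mid\beta_{b,k}(n)\mid x_j$ while $\beta_{b,k}(p_1n+j)\in\widetilde{\Delta}_{b,k}$. Modulo $q_1^k$ one uses $q^kt^k\equiv 1$ to rewrite
\[
\beta_{b,k}(m)\equiv \ell_{p_1,l_1k}(m!)\,p_1^{\,\nu_{p_1}(m!)\bmod l_1}\,t^{\lfloor \nu_{p_1}(m!)/l_1\rfloor}\pmod{q_1^k}.
\]
Writing $N=\nu_{p_1}(n!)$ and invoking $\nu_{p_1}((p_1n+j)!)=N+n$ together with Lemma~\ref{lem:recurrence}, the target becomes $\ell_{p_1,l_1k}(n!)\,(p_1n+j)_{p_1}!\,p_1^{\,(N+n)\bmod l_1}\,t^{\lfloor (N+n)/l_1\rfloor}$. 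The morphism must recover, from the single residue $\beta_{b,k}(n)$, its $p_1$-free part, its $p_1$-adic valuation, and the relevant Gauss factorial. The decisive point is that $\nu_{p_1}(\beta_{b,k}(n))=N\bmod l_1$, because $\ell_{p_1,l_1k}(n!)$, $q^k$ and $t$ are prime to $p_1$ and $N\bmod l_1<l_1\le l_1k$. Consequently the carry in $(N\bmod l_1)+(z\bmod l_1)$ — which equals the carry in $(N+n)\bmod l_1$, as $z\equiv n\pmod{l_1}$ — is exactly $\varepsilon(\beta_{b,k}(n),z)$, and this is what trades the wraparound of the valuation modulo $l_1$ for the increment of the $t$-exponent. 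The divisibility $u=l_1s\mid v$ gives $\lfloor z/l_1\rfloor\equiv\lfloor n/l_1\rfloor\pmod s$, so the two $t$-exponents agree modulo the order $s$ of $t$ in $(\mathbb{Z}/q_1^k\mathbb{Z})^{\times}$; and $q_1^k=p_1^{l_1k}\mid p_1v$ combined with Lemma~\ref{lem:product} yields the periodicity $(p_1z+j)_{p_1}!\equiv (p_1n+j)_{p_1}!\pmod{q_1^k}$. Assembling these congruences gives $x_j\equiv\beta_{b,k}(p_1n+j)\pmod{q_1^k}$, completing the recurrence.

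The main obstacle is exactly this $p_1$-adic bookkeeping: one must be sure that $\varepsilon$, defined through $\nu_{p_1}(\beta_{b,k}(n))=N\bmod l_1$, records the carry correctly, so that the surplus power of $p_1$ beyond $p_1^{l_1}$ is converted into a factor of $t$ modulo $q_1^k$, and that the periodicity of the Gauss factorial leaves behind no spurious sign — this is precisely where the factors $2$ and $p_1^{l_1k-1}$ in $v$, and the exceptional choice $v=4$ when $b^k=4$, enter, via the $\pm 1$ of Lemma~\ref{lem:product} and a parity count of the number of complete blocks. As a preliminary check I would treat the case $l_1=1$ first, where $\varepsilon\equiv 0$ and there is no valuation to track, so that the recurrence collapses to the simple shape recorded just before the statement; verifying general $l_1$ is then the genuine work.
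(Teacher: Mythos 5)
Your proposal follows the same route as the paper's own proof: verify the fixed‑point recurrence $\widetilde{\varphi}_{b,k}(\widetilde{\alpha}(n))=\widetilde{\alpha}(p_1n)\cdots\widetilde{\alpha}(p_1n+p_1-1)$ coordinatewise, split modulo $q^k$ and $q_1^k$, and reduce everything to the $p_1$-adic bookkeeping of the carry $\varepsilon$. The gap is that the one step you yourself single out as ``the genuine work'' is then asserted rather than performed, and when performed it does not close. Write $N=\nu_{p_1}(n!)$. A carry has \emph{two} effects on the target $\beta_{b,k}(p_1n+j)$: the exponent $\lfloor (N+n)/l_1\rfloor$ of $t$ increases by $1$, \emph{and} the exponent $(N+n)\bmod l_1$ of $p_1$ drops by $l_1$. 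The factor $t^{\varepsilon}$ in $x_j$ implements only the first effect; nothing in $x_j=x\,p_1^{z\bmod l_1}\,t^{(\lfloor z/l_1\rfloor+\varepsilon)\bmod s}\,(p_1z+j)_{p_1}!$ removes the surplus $p_1^{l_1}$, and the factor $t$ cannot absorb it, since $t$ inverts $q$ modulo $q_1^k$, not $q_1$. Assembling your congruences actually yields
\begin{equation*}
x_j\equiv \beta_{b,k}(p_1n+j)\cdot p_1^{\,l_1\varepsilon}\pmod{q_1^k},
\end{equation*}
so the claimed identity fails by a factor $p_1^{l_1}$ whenever $\varepsilon=1$. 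Your preliminary check $l_1=1$ never detects this because there $\varepsilon\equiv 0$. Concretely, for $b=4$, $k=1$ (so $p_1=2$, $l_1=2$, $q=t=s=1$, $v=4$) and $n=3$ one has $(x,z)=(\beta_{4,1}(3),3)=(2,3)$, $\varepsilon(2,3)=1$, and $x_0=2\cdot 2\cdot (6)_2!\equiv 0\pmod 4$, whereas $\beta_{4,1}(6)=1$.

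Moreover, the missing factor cannot simply be divided out, so this is not a repairable bookkeeping slip in your argument: from the letter $x=\beta_{b,k}(n)\bmod b^k$ alone one cannot recover $\ell_{p_1,l_1k}(n!)$ modulo $p_1^{l_1k}$ once $\nu_{p_1}(x)>0$ (for $b=4$, $k=1$ the pairs $(\ell_{2,2}(n!),\nu_2(n!)\bmod 2)=(1,1)$ and $(3,1)$ both collapse to the letter $2$, and by Proposition \ref{prop:equal_dens} both occur with the same residue $n\bmod v$; their images at the next step differ). Hence the smaller alphabet loses exactly the information that a carry needs, and any correct treatment must explain how this is to be recovered. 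You should be aware that the displayed computation in the paper's own proof elides the same factor $p_1^{l_1\varepsilon}$ in passing from its second to its third line, so matching the paper is not a defense here. For $l_1=1$ (where $\varepsilon\equiv 0$ and no wraparound occurs) your argument, and in particular your verification of the Gauss-factorial periodicity via $p_1^{l_1k}\mid p_1v$ and Lemma \ref{lem:product}, is complete and correct.
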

\begin{proof}
It suffices to show that $$\widetilde{\varphi}_{b,k}^{\omega}(q^kt^k,0) = \{(\beta_{b,k}(n), n \bmod v)\}_{n \geq 0}.$$  By definition $\beta_{b,k}(0) = q^kt^k$. Let $n \geq 0$ and denote $(x,z) = (\beta_{b,k}(n), n \bmod v)$. Using the recurrence relations in Lemma \ref{lem:recurrence} and  Proposition \ref{prop:morphism}, we get for each for $j=0,1,\ldots,p_1-1$:
\begin{align*}
\beta_{b,k}(p_1n+j) &\equiv \ell_{p_1,l_1k}((p_1n+j)!)p_1^{\nu_{p_1}((p_1n+j)!) \bmod l_1}q^k t^{k+ (\lfloor \nu_{p_1}((p_1n+j)!) / l_1 \rfloor \bmod{s})} \\
&\equiv \ell_{p_1,l_1k}(n!) (p_1n+j)_{p_1}! \: p_1^{(\nu_{p_1}(n!)+n) \bmod l_1} q^k t^{k+ (\lfloor (\nu_{p_1}(n!)+n) / l_1 \rfloor \bmod{s})} \nonumber \\
&\equiv \beta_{b,k}(n) (p_1z+j)_p! \: p_1^{z \bmod l_1} t^{(\lfloor z/l_1 \rfloor +\varepsilon(x,z)) \bmod{s}}  \nonumber \\
&\equiv x_j \pmod{b^k}. \nonumber
\end{align*}
The result follows immediately.
\end{proof}
Observe that 
$$|\widetilde{\Sigma}_{b,k}| = q_1^{k-1}(q_1-1)v \leq q_1^{k-1}(q_1-q_1/p_1)uv = |\Sigma_{b,k}| $$
with equality if and only if $u=1$ or, equivalently, $l_1 = s = 1$. In other words, $\widetilde{\Sigma}_{b,k}$ contains roughly $u$ times less elements than $\Sigma_{b,k}$. 

\section{Acknowledgements} 
I would like to thank Jakub Byszewski for careful proofreading of the manuscript and many helpful suggestions.

\bibliographystyle{amsplain}
\bibliography{references}

\providecommand{\bysame}{\leavevmode\hbox to3em{\hrulefill}\thinspace}
\providecommand{\MR}{\relax\ifhmode\unskip\space\fi MR }
\providecommand{\MRhref}[2]{%
  \href{http://www.ams.org/mathscinet-getitem?mr=#1}{#2}
}
\providecommand{\href}[2]{#2}
\begin{thebibliography}{1}

\bibitem{AS03}
Jean-Paul Allouche and Jeffrey Shallit, \emph{Automatic sequences: Theory,
  applications, generalizations}, Cambridge University Press, Cambridge, 2003.
  \MR{1997038}

\bibitem{CD11}
John~B. Cosgrave and Karl Dilcher, \emph{An introduction to {G}auss
  factorials}, Amer. Math. Monthly \textbf{118} (2011), no.~9, 812--829.
  \MR{2854003}

\bibitem{De12}
Jean-Marc Deshouillers, \emph{A footnote to {\it {t}he least non zero digit of
  {$n$}! in base {$12$}}}, Unif. Distrib. Theory \textbf{7} (2012), no.~1,
  71--73. \MR{2943161}

\bibitem{De16}
\bysame, \emph{Yet another footnote to {\it {t}he least non zero digit of
  {$n!$} in base} 12}, Unif. Distrib. Theory \textbf{11} (2016), no.~2,
  163--167. \MR{3636294}

\bibitem{DL10}
Jean-Marc Deshouillers and Florian Luca, \emph{How often is {$n!$} a sum of
  three squares?}, The legacy of {A}lladi {R}amakrishnan in the mathematical
  sciences, Springer, New York, 2010, pp.~243--251. \MR{2744265}

\bibitem{DR11}
Jean-Marc Deshouillers and Imre~Z. Ruzsa, \emph{The least nonzero digit of
  {$n!$} in base 12}, Publ. Math. Debrecen \textbf{79} (2011), no.~3-4,
  395--400. \MR{2907974}

\bibitem{Dr08}
Gregory~P. Dresden, \emph{Three transcendental numbers from the last non-zero
  digits of {$n^n,\ F_n$}, and {$n!$}}, Math. Mag. \textbf{81} (2008), no.~2,
  96--105. \MR{2398091}

\bibitem{Li18}
Eryk Lipka, \emph{{Automaticity of the sequence of the last nonzero digits of
  $n!$ in a fixed base}}, arXiv:1806.02560v1 (2018).

\end{thebibliography}
\end{document}